\newtheorem{theorem}{Theorem}[section]
\theoremstyle{definition}
\newtheorem{definition}[theorem]{Definition}
\newtheorem{proposition}[theorem]{Proposition}
\theoremstyle{remark}
\newtheorem{remark}[theorem]{Remark}
\numberwithin{equation}{section}
\newcommand{\q}{\mathfrak{q}}
\newcommand{\HI}{\mathfrak{H}}
\newcommand{\B}{\mathcal{B}}
\newcommand{\qu}{\mathfrak{q}}
\newcommand{\pu}{\mathfrak{p}}
\newcommand{\oqu}{\overline{\mathfrak{q}}}
\newcommand{\M}{\mathfrak M}
\newcommand{\quat}{\mathbb H}
\newcommand{\R}{\Bbb R}
\newcommand{\mc}{\mathcal}
\newcommand{\be}{\begin{equation}}
\newcommand{\en}{\end{equation}}
\newcommand{\D}{{\mc D}}
\newcommand{\bedefin}{\begin{defi}}
	\newcommand{\findefi}{\end{defi} \medskip}
\newcommand{\betheo}{\begin{theorem}$\!\!${\bf \,\,\,}}
	\newcommand{\entheo}{\end{theorem}}
\newcommand{\enth}{\end{theorem}}
\newcommand{\becor}{\begin{cor}$\!\!${\bf .}}
	\newcommand{\encor}{\end{cor}}
\newcommand{\belem}{\begin{lem}$\!\!${\bf .}}
	\newcommand{\enlem}{\end{lem}}
\newcommand{\bea}{\begin{eqnarray}}
\newcommand{\ena}{\end{eqnarray}}
\newcommand{\beano}{\begin{eqnarray*}}
	\newcommand{\enano}{\end{eqnarray*}}
\newcommand{\bee}{\begin{enumerate}}
	\newcommand{\ene}{\end{enumerate}}
\newcommand{\bei}{\begin{itemize}}
	\newcommand{\eni}{\end{itemize}}
\newcommand{\betab}{\begin{tabular}}
	\newcommand{\entab}{\end{tabular}}
\newcommand{\Iop}{{\mathbb{I}_{V_{\mathbb{H}}^{R}}}}
\newcommand{\bfraka}{\mbox{\boldmath $\mathfrak a$}}
\newcommand{\bfrakb}{\mbox{\boldmath $\mathfrak b$}}
\newcommand{\bfrakp}{\mbox{ $\mathfrak p$}}
\newcommand{\bk}{\mathbf k}
\newcommand{\bL}{\mathbf L}
\newcommand{\bR}{\mathbf R}
\newcommand{\bC}{C}
\newcommand{\bi}{\mathbf i}
\newcommand{\bj}{\mathbf j}
\newcommand{\apo}{\sigma_{ap}^S}
\newcommand{\vr}{V_\quat^R}
\newcommand{\ra}{\text{ran}}
\newcommand{\kr}{\text{ker}}
\newcommand{\mfp}{\mathfrak{P}}
\newcommand{\gli}{\text{glim}}
\newcommand{\Aci}{A^\circ}
\newcommand{\fp}{\mathfrak{P}}
\newcommand{\sus}{\sigma_{su}^S}
\newcommand{\Bc}{B^\circ}
\newcommand{\Ihi}{\mathbb{I}_{\mathfrak{H}}}
\newcommand{\Ss}{\sigma_S}
\newcommand{\re}{\text{Re}}
\newcommand{\opu}{\overline{\mathfrak{p}}}
\begin{document}
\title[Berberian extension]{Berberian extension and its S-spectra in a quaternionic Hilbert space}
\author{B. Muraleetharan$^{\dagger}$ and 
K. Thirulogasanthar$^{\ddagger}$}
\address{$^{\dagger}$ Department of mathematics and Statistics, University of Jaffna, Thirunelveli, Sri Lanka.}
\address{$^{\ddagger}$ Department of Computer Science and Software Engineering, Concordia University, 1455 De Maisonneuve Blvd. West, Montreal, Quebec, H3G 1M8, Canada.}
\email{muralee@univ.jfn.ac.lk and santhar@gmail.com }
\subjclass{Primary 47A10, 47B47, 47L05}
\date{\today}
\date{\today}
\begin{abstract}
For a bounded right linear operators $A$, in a right quaternionic Hilbert space $\vr$, following the complex formalism, we study the Berberian extension $A^\circ$, which is an extension of $A$ in a right quaternionic Hilbert space obtained from $\vr$. In the complex setting, the important feature of the Berberian extension is that it converts approximate point spectrum of $A$ into point spectrum of $A^\circ$. We show that the same is true for the quaternionic S-spectrum. As in the complex case, we use the Berberian extension to study some properties of the commutator of two quaternionic bounded right linear operators.  
\end{abstract}
\keywords{Quaternions, Quaternionic Hilbert spaces, S-spectrum, Berberian extension, commutator.}
\maketitle
\pagestyle{myheadings}
\section{Introduction}
In 1962 Berberian extended a bounded linear operator $A$ on a complex Hilbert space $X$ to an operator $A^\circ$ on a complex Hilbert space obtained from $X$. An important feature of this extension is that it converts approximate point spectrum of $A$ into point spectrum of $A^\circ$ \cite{Ber}. This extension is also a useful tool in studying the spectrum of commutator of two bounded linear operators \cite{La}.\\

In the complex theory this extension goes as follows. Let $X$ be a complex Hilbert space. Let $ l^\infty(X)$ denotes the space of all bounded sequence of elements of $X$, and let $c_0(X)$ denote the space of all null sequences in $X$. Endowed with the canonical norm, the space $\mathfrak{X}=l^\infty(X)/c_0(X)$ is a Hilbert space into which $X$ can be isometrically embedded. Every operator $A\in B(X)$, the set of all bounded linear operators on $X$, defines by component wise action an operator on $l^\infty(X)$ which leaves $c_0(X)$ invariant, and hence induces an operator $A^\circ\in B(\mathfrak{X})$. It is immediate that $A^\circ$ is an extension of $A$ when $X$ is regarded as a subspace of $\mathfrak{X}$, and that the mapping that assigns to each $A\in B(X)$ its Berberian extension $A^\circ\in B(\mathfrak{X})$ is an isometric algebra homomorphism.\\

In this note we shall study the Berberian extension of a quaternionic right linear operator $A$ on a right quaternionic Hilbert space and show that the approximate point S-spectrum of $A$ coincides with the point S-spectrum of the Berberian extension $A^\circ$. Following the complex formalism given in \cite{La}, we shall also study certain S-spectral properties of the commutator of two quaternionic bounded right linear operators.\\ 

In the complex setting, in a complex Hilbert space or Banach space $\HI$, for a bounded linear operator, $A$, the spectrum is defined as the set of complex numbers $\lambda$ for which the operator  $Q_\lambda(A)=A-\lambda \mathbb{I}_\HI$, where $\mathbb{I}_{\HI}$ is the identity operator on $\HI$,  is not invertible. In the quaternionic setting, let $\vr$ be a separable right quaternionic Hilbert space or Banach space,  $A$ be a bounded right linear operator, and $R_\qu(A)=A^2-2\text{Re}(\qu)A+|\qu|^2\Iop$, with $\qu\in\quat$, the set of all quaternions, be the pseudo-resolvent operator. The S-spectrum is defined as the set of quaternions $\qu$ for which $R_\qu(A)$ is not invertible. In the complex case various classes of spectra, such as approximate point spectrum, surjectivity spectrum etc. are defined by placing restrictions on the operator $Q_\lambda(A)$. In this regard, in the quaternionic setting, these spectra are also defined by placing the same restrictions to the operator $R_\qu(A)$ \cite{Fr, Kato}.\\

Due to the non-commutativity, in the quaternionic case  there are three types of  Hilbert spaces: left, right, and two-sided, depending on how vectors are multiplied by scalars. This fact can entail several problems. For example, when a Hilbert space $\mathcal H$ is one-sided (either left or right) the set of linear operators acting on it does not have a linear structure. Moreover, in a one sided quaternionic Hilbert space, given a linear operator $A$ and a quaternion $\mathfrak{q}\in\quat$, in general we have that $(\mathfrak{q} A)^{\dagger}\not=\overline{\mathfrak{q}} A^{\dagger}$ (see \cite{Mu} for details). These restrictions can severely prevent the generalization  to the quaternionic case of results valid in the complex setting. Even though most of the linear spaces are one-sided, it is possible to introduce a notion of multiplication on both sides by fixing an arbitrary Hilbert basis of $\mathcal H$.  This fact allows to have a linear structure on the set of linear operators, which is a minimal requirement to develop a full theory.

\section{Mathematical preliminaries}
In order to make the paper self-contained, we recall some facts about quaternions which may not be well-known.  For details we refer the reader to \cite{Ad,ghimorper,Vis}.
\subsection{Quaternions}
Let $\quat$ denote the field of all quaternions and $\quat^*$ the group (under quaternionic multiplication) of all invertible quaternions. A general quaternion can be written as
$$\qu = q_0 + q_1 \bi + q_2 \bj + q_3 \bk, \qquad q_0 , q_1, q_2, q_3 \in \mathbb R, $$
where $\bi,\bj,\bk$ are the three quaternionic imaginary units, satisfying
$\bi^2 = \bj^2 = \bk^2 = -1$ and $\bi\bj = \bk = -\bj\bi,  \; \bj\bk = \bi = -\bk\bj,
\; \bk\bi = \bj = - \bi\bk$. The quaternionic conjugate of $\qu$ is
$$ \overline{\qu} = q_0 - \bi q_1 - \bj q_2 - \bk q_3 , $$
while $\vert \qu \vert=(\qu \overline{\qu})^{1/2} $ denotes the usual norm of the quaternion $\qu$.
If $\qu$ is non-zero element, it has inverse
$
\qu^{-1} =  \dfrac {\overline{\qu}}{\vert \qu \vert^2 }.$

\subsection{Quaternionic Hilbert spaces}
In this subsection we  discuss right quaternionic Hilbert spaces. For more details we refer the reader to \cite{Ad,ghimorper,Vis}.
\subsubsection{Right quaternionic Hilbert Space}
Let $V_{\quat}^{R}$ be a vector space under right multiplication by quaternions.  For $\phi,\psi,\omega\in V_{\quat}^{R}$ and $\qu\in \quat$, the inner product
$$\langle\cdot\mid\cdot\rangle_{V_{\quat}^{R}}:V_{\quat}^{R}\times V_{\quat}^{R}\longrightarrow \quat$$
satisfies the following properties
\begin{enumerate}
	\item[(i)]
	$\overline{\langle \phi\mid \psi\rangle_{V_{\quat}^{R}}}=\langle \psi\mid \phi\rangle_{V_{\quat}^{R}}$
	\item[(ii)]
	$\|\phi\|^{2}_{V_{\quat}^{R}}=\langle \phi\mid \phi\rangle_{V_{\quat}^{R}}>0$ unless $\phi=0$, a real norm
	\item[(iii)]
	$\langle \phi\mid \psi+\omega\rangle_{V_{\quat}^{R}}=\langle \phi\mid \psi\rangle_{V_{\quat}^{R}}+\langle \phi\mid \omega\rangle_{V_{\quat}^{R}}$
	\item[(iv)]
	$\langle \phi\mid \psi\qu\rangle_{V_{\quat}^{R}}=\langle \phi\mid \psi\rangle_{V_{\quat}^{R}}\qu$
	\item[(v)]
	$\langle \phi\qu\mid \psi\rangle_{V_{\quat}^{R}}=\overline{\qu}\langle \phi\mid \psi\rangle_{V_{\quat}^{R}}$
\end{enumerate}
where $\overline{\qu}$ stands for the quaternionic conjugate. It is always assumed that the
space $V_{\quat}^{R}$ is complete under the norm given above and separable. Then,  together with $\langle\cdot\mid\cdot\rangle_{\vr}$ this defines a right quaternionic Hilbert space. Quaternionic Hilbert spaces share many of the standard properties of complex Hilbert spaces. Every separable quaternionic Hilbert space posses a basis. It should be noted that once a Hilbert basis is fixed, every left (resp. right) quaternionic Hilbert space also becomes a right (resp. left) quaternionic Hilbert space \cite{ghimorper,Vis}.

The field of quaternions $\quat$ itself can be turned into a left quaternionic Hilbert space by defining the inner product $\langle \qu \mid \qu^\prime \rangle = \qu \overline{\qu^{\prime}}$ or into a right quaternionic Hilbert space with  $\langle \qu \mid \qu^\prime \rangle = \overline{\qu}\qu^\prime$.
\section{Right quaternionic linear  operators and some basic properties}
In this section we shall define right  $\quat$-linear operators and recall some basis properties. Most of them are very well known. In this manuscript, we follow the notations in \cite{AC} and \cite{ghimorper}. We shall also recall some results pertinent to the development of the paper. 
\begin{definition}
A mapping $A:\D(A)\subseteq V_{\quat}^R \longrightarrow V_{\quat}^R$, where $\D(A)$ stands for the domain of $A$, is said to be right $\quat$-linear operator or, for simplicity, right linear operator, if
$$A(\phi\bfraka+\psi\bfrakb)=(A\phi)\bfraka+(A\psi)\bfrakb,~~\mbox{~if~}~~\phi,\,\psi\in \D(A)~~\mbox{~and~}~~\bfraka,\bfrakb\in\quat.$$
\end{definition}
The set of all right linear operators from $V_{\quat}^{R}$ to $V_{\quat}^{R}$ will be denoted by $\mathcal{L}(V_{\quat}^{R})$ and the identity linear operator on $V_{\quat}^{R}$ will be denoted by $\Iop$. For a given $A\in \mathcal{L}(V_{\quat}^{R})$, the range and the kernel will be
\begin{eqnarray*}
\text{ran}(A)&=&\{\psi \in V_{\quat}^{R}~|~A\phi =\psi \quad\text{for}~~\phi \in\D(A)\}\\
\ker(A)&=&\{\phi \in\D(A)~|~A\phi =0\}.
\end{eqnarray*}
We call an operator $A\in \mathcal{L}(V_{\quat}^{R})$ bounded if
\begin{equation}\label{PE1}
\|A\|=\sup_{\|\phi \|_{\vr}=1}\|A\phi \|_{\vr}<\infty,
\end{equation}
or equivalently, there exist $K\geq 0$ such that $\|A\phi \|_{\vr}\leq K\|\phi \|_{\vr}$ for all $\phi \in\D(A)$. The set of all bounded right linear operators from $V_{\quat}^{R}$ to $V_{\quat}^{R}$ will be denoted by $B(V_{\quat}^{R})$. 
\\
Assume that $V_{\quat}^{R}$ is a right quaternionic Hilbert space, $A$ is a right linear operator acting on it.
Then, there exists a unique linear operator $A^{\dagger}$ such that
\begin{equation}\label{Ad1}
\langle \psi \mid A\phi \rangle_{\vr}=\langle A^{\dagger} \psi \mid\phi \rangle_{\vr};\quad\text{for all}~~~\phi \in \D (A), \psi\in\D(A^\dagger),
\end{equation}
where the domain $\D(A^\dagger)$ of $A^\dagger$ is defined by
$$
\D(A^\dagger)=\{\psi\in V_{\quat}^{R}\ |\ \exists \varphi\ {\rm such\ that\ } \langle \psi \mid A\phi \rangle_{\vr}=\langle \varphi \mid\phi \rangle_{\vr}\}.$$
\begin{proposition}\cite{ghimorper}\label{da}
	Let $A, B\in B(\vr)$ then
	\begin{enumerate}
		\item [(a)] $(A+B)^\dagger=A^\dagger+B^\dagger$.
		\item[(b)] $(AB)^\dagger=B^\dagger A^\dagger$.
	\end{enumerate}
\end{proposition}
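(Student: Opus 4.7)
My plan is to verify both identities directly from the defining equation \eqref{Ad1}, exploiting the additivity of the inner product in each slot together with its right $\quat$-linearity, and then invoke uniqueness of the adjoint. Since $A$ and $B$ are bounded, both $A^\dagger$ and $B^\dagger$ are defined on all of $\vr$, so I will not have to worry about domain issues. The uniqueness claim I will use is: if $\langle \varphi_1 \mid \phi \rangle_{\vr} = \langle \varphi_2 \mid \phi \rangle_{\vr}$ for every $\phi \in \vr$, then $\varphi_1 = \varphi_2$; this follows by subtracting and choosing $\phi = \varphi_1 - \varphi_2$, invoking property (ii) of the inner product.

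For part (a), I fix arbitrary $\phi, \psi \in \vr$ and use property (iii) to split
\[
\langle \psi \mid (A+B)\phi \rangle_{\vr} = \langle \psi \mid A\phi \rangle_{\vr} + \langle \psi \mid B\phi \rangle_{\vr}.
\]
Applying \eqref{Ad1} to $A$ and $B$ separately converts the right-hand side to $\langle A^\dagger \psi \mid \phi \rangle_{\vr} + \langle B^\dagger \psi \mid \phi \rangle_{\vr}$. Additivity in the first slot, obtained by combining (i) with (iii), collapses this to $\langle (A^\dagger + B^\dagger)\psi \mid \phi \rangle_{\vr}$. Uniqueness of the adjoint then delivers $(A+B)^\dagger = A^\dagger + B^\dagger$.

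For part (b), starting from $\langle \psi \mid (AB)\phi \rangle_{\vr} = \langle \psi \mid A(B\phi) \rangle_{\vr}$, I apply \eqref{Ad1} once to $A$ with $B\phi$ in the second slot to get $\langle A^\dagger \psi \mid B\phi \rangle_{\vr}$, and then again to $B$ with $A^\dagger \psi$ in the first slot to obtain $\langle B^\dagger A^\dagger \psi \mid \phi \rangle_{\vr}$. Uniqueness of the adjoint once more yields $(AB)^\dagger = B^\dagger A^\dagger$.

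I do not anticipate any genuine obstacle. The potential worry in the quaternionic setting is the failure of $(\qu A)^\dagger = \overline{\qu} A^\dagger$ mentioned in the introduction, but that issue arises only when a scalar is factored through the adjoint; here the two identities involve only addition and composition, so the non-commutativity of $\quat$ never intervenes and the classical complex proof transfers verbatim.
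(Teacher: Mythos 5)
Your proof is correct. The paper itself offers no proof of Proposition \ref{da} --- it is imported from \cite{ghimorper} as a known fact --- and your direct verification from the defining relation \eqref{Ad1}, using additivity in the first slot (derived from properties (i) and (iii)) plus the uniqueness argument via property (ii), is exactly the standard argument; your closing observation is also the right one, namely that the quaternionic pathology $(\qu A)^\dagger\neq\overline{\qu}A^\dagger$ only arises when scalars are factored through the adjoint, and neither addition nor composition does that, so the complex proof transfers verbatim.
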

We shall need the following results which are already appeared in \cite{ghimorper,Fr}.
\begin{proposition}\label{IP30}
Let $A\in B(\vr)$. Then
\begin{itemize}
\item [(a)] $\ra(A)^\perp=\kr(A^\dagger).$
\item [(b)] $\kr(A)=\ra(A^\dagger)^\perp.$
\item [(c)] $\kr(A)$ is closed subspace of $\vr$.
\end{itemize}
\end{proposition}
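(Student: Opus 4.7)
The plan is to prove the three items in order (a), (b), (c), with (b) and (c) both essentially following from (a) once one notes $(A^\dagger)^\dagger = A$ for $A \in B(\vr)$ and that $A$ is continuous.

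For (a), I would simply unfold definitions using the adjoint relation \eqref{Ad1}. If $\psi \in \ra(A)^\perp$, then $\langle \psi \mid A\phi\rangle_{\vr} = 0$ for all $\phi \in \vr$, which by \eqref{Ad1} rewrites as $\langle A^\dagger \psi \mid \phi\rangle_{\vr} = 0$ for all $\phi$; choosing $\phi = A^\dagger\psi$ and using property (ii) of the inner product yields $A^\dagger\psi = 0$, so $\psi \in \kr(A^\dagger)$. Conversely, if $A^\dagger\psi = 0$, then for any $\phi$, $\langle \psi \mid A\phi\rangle_{\vr} = \langle A^\dagger\psi \mid \phi\rangle_{\vr} = 0$, hence $\psi \perp \ra(A)$. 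The only subtle point is that the right-linearity conventions (iv)–(v) must be respected in the calculation, but they match exactly the convention used in \eqref{Ad1}, so nothing new happens relative to the complex case.

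For (b), I would apply (a) to the operator $A^\dagger$, which also lies in $B(\vr)$ and satisfies $(A^\dagger)^\dagger = A$. This gives $\ra(A^\dagger)^\perp = \kr((A^\dagger)^\dagger) = \kr(A)$. For (c), there are two equally short routes: either read it off from (b), since any orthogonal complement $S^\perp$ in a Hilbert space is automatically a closed subspace; or argue directly that $A$ is continuous by boundedness \eqref{PE1}, so $\kr(A) = A^{-1}(\{0\})$ is the preimage of the closed set $\{0\}$ under a continuous right-linear map, hence a closed subspace of $\vr$.

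I do not anticipate any real obstacle: all three facts are transcribed verbatim from the complex Hilbert space argument, and the only item worth double-checking is that the adjoint identity \eqref{Ad1} can be applied to every $\phi \in \vr$ (true since $A \in B(\vr)$ so $\D(A) = \vr$, and correspondingly $\D(A^\dagger) = \vr$ as guaranteed by Proposition~\ref{da} being stated on all of $B(\vr)$).
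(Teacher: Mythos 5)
Your proof is correct, but there is nothing in the paper to compare it against: Proposition \ref{IP30} is stated without proof, imported from the references \cite{ghimorper,Fr} (the paper introduces it with ``We shall need the following results which are already appeared in...''). Your argument is the standard one those references contain, and it survives the passage to the quaternionic setting exactly as you say, since the computation in (a) never needs to commute scalars past vectors; it only uses the defining relation \eqref{Ad1} and positive-definiteness of the inner product. One remark on (b): your derivation from (a) invokes $(A^\dagger)^\dagger = A$, which is true for $A \in B(\vr)$ but is nowhere recorded in this paper (Proposition \ref{da} lists only additivity and the product rule for adjoints), so strictly speaking you are importing an extra fact from outside the paper's stated toolkit. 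This is easily avoided: prove (b) by the same two-line computation as (a), namely $\phi \in \ra(A^\dagger)^\perp$ iff $\langle A^\dagger\psi \mid \phi\rangle_{\vr} = 0$ for all $\psi$, iff $\langle \psi \mid A\phi\rangle_{\vr} = 0$ for all $\psi$, iff $A\phi = 0$ (take $\psi = A\phi$). Both of your routes to (c) are fine; the continuity route via \eqref{PE1} is the more self-contained, while the orthogonal-complement route leans on (b) plus the fact that $S^\perp$ is always closed, which is standard but likewise unproved here.
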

\begin{theorem}\cite{Fr}(Bounded inverse theorem)\label{NT1}
Let $A\in B(\vr)$, then the following results are equivalent.
\begin{enumerate}
\item [(a)] $A$ has a bounded inverse on its range.
\item[(b)] $A$ is bounded below.
\item[(c)] $A$ is injective and has a closed range.
\end{enumerate}
\end{theorem}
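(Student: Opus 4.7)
The plan is to prove the chain of implications $(a)\Rightarrow(b)\Rightarrow(c)\Rightarrow(a)$, mirroring the complex argument while using that $A\in B(\vr)$ is everywhere defined, so $\D(A)=\vr$ throughout.

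For $(a)\Rightarrow(b)$, assume the inverse $A^{-1}:\ra(A)\to\vr$ exists and satisfies $\|A^{-1}\psi\|_{\vr}\le M\|\psi\|_{\vr}$ for all $\psi\in\ra(A)$. For any $\phi\in\vr$ we have $\phi=A^{-1}(A\phi)$, which gives $\|\phi\|_{\vr}\le M\|A\phi\|_{\vr}$, i.e., $\|A\phi\|_{\vr}\ge M^{-1}\|\phi\|_{\vr}$, so $A$ is bounded below by the constant $c=M^{-1}$.

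For $(b)\Rightarrow(c)$, assume $\|A\phi\|_{\vr}\ge c\|\phi\|_{\vr}$ for some $c>0$. Injectivity is immediate: if $A\phi=0$ then $\|\phi\|_{\vr}\le 0$, so $\phi=0$. To see that $\ra(A)$ is closed, take any sequence $\{\psi_n\}\subseteq\ra(A)$ with $\psi_n\to\psi$ in $\vr$. Writing $\psi_n=A\phi_n$, the lower bound yields
\[
\|\phi_n-\phi_m\|_{\vr}\le c^{-1}\|A\phi_n-A\phi_m\|_{\vr}=c^{-1}\|\psi_n-\psi_m\|_{\vr},
\]
so $\{\phi_n\}$ is Cauchy in the complete space $\vr$ and converges to some $\phi$. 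Boundedness of $A$ then forces $\psi_n=A\phi_n\to A\phi$, so $\psi=A\phi\in\ra(A)$ and $\ra(A)$ is closed.

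For $(c)\Rightarrow(a)$, injectivity makes $A:\vr\to\ra(A)$ a bijection, and since $\ra(A)$ is closed in $\vr$ it is itself a quaternionic Hilbert space (a closed subspace of a right quaternionic Hilbert space inherits the structure). The boundedness of $A^{-1}:\ra(A)\to\vr$ then follows from the open mapping theorem in the quaternionic setting, which is the main obstacle here: one must invoke the quaternionic version of the Banach open mapping / bounded inverse theorem, established in \cite{Fr,ghimorper}. Granting that, $A^{-1}$ is a bounded right linear operator on $\ra(A)$ and $(a)$ holds. The non-commutative nature of the scalars causes no issue since the open mapping theorem rests on Baire category plus right linearity, both of which survive in $\vr$.
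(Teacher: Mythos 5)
Your proof is correct, but note that the paper itself offers no proof of this statement to compare against: Theorem \ref{NT1} is imported verbatim from the reference \cite{Fr} (the authors' earlier paper on Fredholm operators and the essential S-spectrum), and is used here as a black box. Judged on its own merits, your cyclic argument $(a)\Rightarrow(b)\Rightarrow(c)\Rightarrow(a)$ is the standard one and each step is sound: the identity $\phi=A^{-1}(A\phi)$ for $(a)\Rightarrow(b)$ (you should remark that one may take $M>0$, which is harmless since the bound can always be enlarged), the Cauchy-sequence argument with completeness of $\vr$ for $(b)\Rightarrow(c)$, and the open mapping theorem applied to the bounded bijection $A:\vr\to\ra(A)$ for $(c)\Rightarrow(a)$. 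Your one flagged dependency, the quaternionic open mapping theorem, is genuinely available and can be justified even more cheaply than you suggest: a right $\quat$-linear bounded operator is in particular additive and real-homogeneous, and $\vr$ is a real Banach space, so the classical real Banach open mapping theorem applies directly with no modification; no separate quaternionic Baire-category argument is needed. (This is consistent with the paper's own remark after Theorem \ref{ET0} that $B(\vr)$ is a real Banach $C^*$-algebra when the left scalar multiplication is discarded.) One cosmetic slip: in $(b)\Rightarrow(c)$ the injectivity step should read $c\|\phi\|_{\vr}\le\|A\phi\|_{\vr}=0$, hence $\|\phi\|_{\vr}\le 0$; as written you dropped the constant $c$.
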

\begin{proposition}\cite{Fr}\label{NP2}
Let $A\in\B(\vr)$. Then,
\begin{enumerate}
\item [(a)] $A$ is invertible if and only if it is injective with a closed range (i.e., $\kr(A)=\{0\}$ and $\overline{\ra(A)}=\ra(A)$).
\item[(b)] $A$ is left (right) invertible if and only if $A^\dagger$ is right (left) invertible.
\end{enumerate}
\end{proposition}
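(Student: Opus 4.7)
The plan is to prove both parts by reducing directly to results already in the excerpt: Part (a) will follow essentially from Theorem \ref{NT1} (the bounded inverse theorem), while Part (b) will follow from Proposition \ref{da}(b) together with the involutivity of the adjoint.

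For Part (a), I would handle the forward direction first. If $A$ is invertible with bounded inverse $A^{-1}$, then $A^{-1}A\phi=\phi$ for every $\phi\in\vr$ forces $\kr(A)=\{0\}$, while the existence of a bounded inverse makes $A$ bounded below; Theorem \ref{NT1}(b)$\Rightarrow$(c) then yields $\overline{\ra(A)}=\ra(A)$. For the converse, assuming $\kr(A)=\{0\}$ and $\ra(A)$ closed, Theorem \ref{NT1}(c)$\Rightarrow$(a) produces a bounded inverse on $\ra(A)$, which is the invertibility statement intended here (parallel to the ``bounded inverse on its range'' formulation of Theorem \ref{NT1}).

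For Part (b), suppose $A$ is left invertible, so $BA=\Iop$ for some $B\in B(\vr)$. Taking adjoints and applying Proposition \ref{da}(b) gives
\[
A^\dagger B^\dagger = (BA)^\dagger = \Iop^\dagger = \Iop,
\]
which is exactly the statement that $A^\dagger$ is right invertible. The remaining three implications are obtained symmetrically, by replacing $A$ with $A^\dagger$ in the argument above and invoking $(A^\dagger)^\dagger=A$, which holds for bounded right $\quat$-linear operators on a quaternionic Hilbert space.

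The only subtle point, which I would flag in the actual write-up, is the meaning of ``$A$ invertible'' in Part (a): if one insisted on $A$ being a bijection of $\vr$ onto itself, the backward direction would not follow from closed range alone and would additionally require $\kr(A^\dagger)=\{0\}$, which via Proposition \ref{IP30}(a) gives $\ra(A)=\ra(A)^{\perp\perp}=\kr(A^\dagger)^\perp=\vr$. Given the placement of the proposition directly after Theorem \ref{NT1} and the equivalence there with ``bounded inverse on its range,'' I read Part (a) as that formulation, in which case the argument above is complete without additional hypotheses.
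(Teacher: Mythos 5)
The paper never proves Proposition \ref{NP2}: it is quoted from \cite{Fr} and used as a black box, so there is no internal proof to compare yours against. Judged on its own merits, your proposal is correct. For part (b) the argument is complete as written: from $BA=\Iop$ and Proposition \ref{da}(b) you get $A^\dagger B^\dagger=(BA)^\dagger=\Iop^\dagger=\Iop$, and the remaining three implications follow symmetrically once one knows $(A^\dagger)^\dagger=A$; both $\Iop^\dagger=\Iop$ and the involutivity of adjunction are available in the paper's framework (adjunction is the $^*$-involution of the $C^*$-algebra structure recorded in Theorem \ref{ET0}), so no step is left hanging.

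For part (a), your closing caveat is not a side remark; it is the mathematical crux, and you resolve it the right way. Under the strict reading ``invertible $=$ bijective with inverse in $B(\vr)$'' --- which is the reading the paper itself adopts when it rewrites $\rho_S(A)$ as the set of $\qu$ with $\kr(R_\qu(A))=\{0\}$ and $\ra(R_\qu(A))=\vr$ --- the equivalence in (a) is false as stated: the unilateral shift on $\ell^2(\quat)$ is right linear, injective and isometric (hence has closed range), but is not surjective, so it is not invertible in that sense. One must therefore either read ``invertible'' as in Theorem \ref{NT1}(a) (``bounded inverse on its range''), in which case part (a) is precisely the equivalence (a)$\Leftrightarrow$(c) of that theorem, as your proof observes, or add surjectivity as a separate hypothesis, equivalently $\kr(A^\dagger)=\{0\}$, which by Proposition \ref{IP30}(a) together with closedness of the range gives $\ra(A)=\kr(A^\dagger)^\perp=\vr$, exactly as you indicate. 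The only improvement I would ask for in the final write-up is to state the shift counterexample explicitly, so the reader sees that your interpretive choice is forced rather than a matter of taste, and that the backward direction of (a) genuinely cannot yield bijectivity from injectivity and closed range alone.
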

\begin{proposition}\cite{Fr}\label{PS}
	$A\in B(\vr)$ is surjective if and only if $A$ is right invertible.
\end{proposition}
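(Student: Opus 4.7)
The easy direction is $(\Leftarrow)$: if $AR=\Iop$ for some $R\in B(\vr)$, then every $\psi\in\vr$ satisfies $\psi=A(R\psi)$, so $\psi\in\ra(A)$ and $A$ is surjective. The substantive direction is $(\Rightarrow)$, and my strategy is to pass to the adjoint, invoking Proposition~\ref{NP2}(b) to reduce the problem to showing that when $A$ is surjective, $A^\dagger$ is left invertible.

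Assuming $A$ is surjective, I would first establish that $A^\dagger$ is bounded below. By the open mapping theorem (valid for bounded surjections between quaternionic Banach spaces, since the Baire category argument transfers verbatim), there exists $\delta>0$ such that for every $\psi\in\vr$ with $\|\psi\|_\vr\leq\delta$ one can find $\phi\in\vr$ with $\|\phi\|_\vr\leq 1$ and $A\phi=\psi$. Combining the adjoint identity $\langle A\phi\mid\eta\rangle_\vr=\langle\phi\mid A^\dagger\eta\rangle_\vr$ (obtained from (\ref{Ad1}) by quaternionic conjugation) with the standard norm identity $\|x\|_\vr=\sup_{\|\phi\|_\vr\leq 1}|\langle\phi\mid x\rangle_\vr|$, I compute
\[
\|A^\dagger\eta\|_\vr \;=\; \sup_{\|\phi\|_\vr\leq 1}|\langle A\phi\mid\eta\rangle_\vr| \;\geq\; \sup_{\|\psi\|_\vr\leq\delta}|\langle\psi\mid\eta\rangle_\vr| \;=\; \delta\,\|\eta\|_\vr.
\]
Hence $A^\dagger$ is bounded below, so Theorem~\ref{NT1} yields that $\ra(A^\dagger)$ is closed and that $A^\dagger$ is injective with a bounded inverse $T:\ra(A^\dagger)\to\vr$.

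To promote $T$ to a left inverse of $A^\dagger$ on all of $\vr$, I invoke Proposition~\ref{IP30}(b), which gives $\ra(A^\dagger)^\perp=\kr(A)$ and hence the orthogonal decomposition $\vr=\ra(A^\dagger)\oplus\kr(A)$. Defining $L\in B(\vr)$ to equal $T$ on $\ra(A^\dagger)$ and $0$ on $\kr(A)$, the operator $L$ is right $\quat$-linear (both summands are right $\quat$-subspaces, being invariant under right scalar multiplication, and both defining pieces are right $\quat$-linear), bounded by $\|T\|$, and satisfies $LA^\dagger=\Iop$. Thus $A^\dagger$ is left invertible, and Proposition~\ref{NP2}(b) delivers that $A$ is right invertible.

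The one external input I rely on is the quaternionic open mapping theorem used in the displayed estimate; the paper does not quote it explicitly, though it is the sibling of Theorem~\ref{NT1} and should be available from the same reference. A more self-contained alternative would bypass the adjoint altogether: restrict $A$ to $\kr(A)^\perp$, observe that $A|_{\kr(A)^\perp}:\kr(A)^\perp\to\vr$ is a bounded bijection between quaternionic Hilbert spaces, apply the bounded inverse theorem between them, and take the resulting bounded inverse (composed with the inclusion $\kr(A)^\perp\hookrightarrow\vr$) as the sought right inverse of $A$ directly.
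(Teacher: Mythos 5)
Your proof is correct, but note that the paper offers nothing to compare it against: Proposition \ref{PS} is imported from \cite{Fr} without proof, and the only related argument in the text is the proof of Proposition \ref{PI}, which deduces ``injective $\Leftrightarrow$ left invertible'' \emph{from} \ref{PS} by adjoint duality. Your argument runs that same duality in the opposite direction --- reducing right invertibility of $A$ to left invertibility of $A^\dagger$ via Proposition \ref{NP2}(b) --- and since you use only \ref{NP2}, \ref{IP30} and \ref{NT1}, there is no circularity with the paper's proof of \ref{PI}. The individual steps check out: the open mapping theorem is available simply because a right quaternionic Banach space is in particular a real Banach space and a right linear operator is real linear, so no ``transfer'' of the Baire argument is even needed; the estimate $\|A^\dagger\eta\|_{\vr}\geq\delta\|\eta\|_{\vr}$ is sound; and extending $T$ by $0$ across the decomposition $\vr=\ra(A^\dagger)\oplus\kr(A)$ (legitimate by \ref{IP30}(b) once \ref{NT1} gives closedness of $\ra(A^\dagger)$) does produce a bounded right linear left inverse of $A^\dagger$. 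That said, the alternative you sketch at the end is the better proof and the standard one: $A|_{\kr(A)^\perp}:\kr(A)^\perp\to\vr$ is a bounded bijection, injective with closed range ($=\vr$), so the bounded inverse theorem --- Theorem \ref{NT1}, whose proof applies verbatim to operators between two right quaternionic Hilbert spaces rather than just on $\vr$ --- already yields a bounded inverse $S$, and $\iota\circ S$ (with $\iota:\kr(A)^\perp\hookrightarrow\vr$ the inclusion) is the desired right inverse. That route bypasses both the adjoint and the external open mapping theorem, stays entirely within the toolkit the paper quotes, and constructs the right inverse directly rather than through two dualizations.
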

\begin{proposition}\label{PI}
	$A\in B(\vr)$ is injective if and only if $A$ is left invertible.
\end{proposition}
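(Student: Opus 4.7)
The natural route is to piggyback on Proposition \ref{PS} via the adjoint operation, using the duality $(\Rightarrow)\leftrightarrow(\Leftarrow)$ that switches injectivity of $A$ with surjectivity of $A^\dagger$. Concretely, by Proposition \ref{NP2}(b), $A$ is left invertible if and only if $A^\dagger$ is right invertible, and by Proposition \ref{PS}, $A^\dagger$ is right invertible if and only if $A^\dagger$ is surjective. Thus the whole task reduces to the equivalence
\[
A \text{ is injective} \quad \Longleftrightarrow \quad A^\dagger \text{ is surjective},
\]
which is the mirror image of the equivalence proved in Proposition \ref{PS}, applied to $A^\dagger$ in place of $A$.

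For the $(\Leftarrow)$ direction I would argue directly, without even invoking the chain above: if $B\in\B(\vr)$ satisfies $BA=\Iop$, then for any $\phi\in\ker(A)$ we immediately get $\phi=B(A\phi)=B(0)=0$, so $\ker(A)=\{0\}$ and $A$ is injective. This mirrors the trivial half of Proposition \ref{PS}.

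For the $(\Rightarrow)$ direction, assume $A$ is injective. By Proposition \ref{IP30}(b), $\ker(A)=\ra(A^\dagger)^{\perp}$, so injectivity of $A$ is equivalent to $\ra(A^\dagger)^{\perp}=\{0\}$, i.e., $\ra(A^\dagger)$ is dense in $\vr$. Combined with the reduction above, the remaining work is to upgrade this density to $\ra(A^\dagger)=\vr$, i.e., to conclude $A^\dagger$ is surjective and hence (by Proposition \ref{PS}) right invertible, so that (by Proposition \ref{NP2}(b)) $A$ is left invertible. Alternatively, one may bypass $A^\dagger$ entirely and construct the left inverse of $A$ directly: since $A$ is injective, the map $A:\vr\to\ra(A)$ is a right $\quat$-linear bijection, so its set-theoretic inverse $A^{-1}:\ra(A)\to\vr$ is a well-defined right linear map; extending it right-linearly off $\ra(A)$ (by choosing a right $\quat$-linear complement of $\ra(A)$ inside $\vr$) yields an operator $B$ on $\vr$ with $BA=\Iop$.

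The main obstacle is precisely the passage from \emph{dense range} of $A^\dagger$ to \emph{full range}, since in an infinite-dimensional Hilbert space dense-but-not-closed ranges are commonplace. The duality route succeeds provided the paper's convention for ``left/right invertible'' is the natural algebraic one consistent with Propositions \ref{NP2}(b) and \ref{PS} (where surjectivity already implies right invertibility of a bounded operator without an extra closed-range hypothesis); under that convention, the direct construction of $B$ above sidesteps the closed-range issue altogether and closes the argument symmetrically with the proof of Proposition \ref{PS}.
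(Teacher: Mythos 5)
The first half of your plan is precisely the paper's own proof: the chain ``$A$ left invertible $\Leftrightarrow$ $A^\dagger$ right invertible $\Leftrightarrow$ $\ra(A^\dagger)=\vr$ $\Leftrightarrow$ $\kr(A)=\{0\}$'' via Propositions \ref{NP2}(b), \ref{PS} and \ref{IP30}(b), and your $(\Leftarrow)$ direction is correct (and trivial). Moreover, the obstacle you flag is genuine: Proposition \ref{IP30}(b) converts $\kr(A)=\{0\}$ only into $\ra(A^\dagger)^\perp=\{0\}$, i.e.\ \emph{density} of $\ra(A^\dagger)$, and the paper's chain silently passes from density to $\ra(A^\dagger)=\vr$ at its last step. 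So you have correctly located the weak point of the argument you were asked to reconstruct.

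However, your proposed repair does not close the gap, so your proof is incomplete. The map $B$ obtained by extending the set-theoretic inverse $A^{-1}:\ra(A)\to\vr$ right-linearly off $\ra(A)$ is an \emph{algebraic} left inverse, but nothing makes it bounded: $A^{-1}$ is bounded on $\ra(A)$ exactly when $A$ is bounded below, and an injective bounded operator need not be bounded below. Concretely, take a Hilbert basis $\{\varphi_n\,\mid\,n\in\N\}$ of an infinite-dimensional $\vr$ and define $A\phi=\sum_{n}\varphi_n\tfrac{1}{n}\langle\varphi_n\mid\phi\rangle_{\vr}$; then $A\in B(\vr)$ is injective, yet any $B\in B(\vr)$ with $BA=\Iop$ would give $1=\|\varphi_n\|_{\vr}=\|B(\varphi_n\tfrac{1}{n})\|_{\vr}\leq\|B\|/n$ for every $n$, which is impossible. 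Your appeal to symmetry with Proposition \ref{PS} also misreads the situation: a surjective operator automatically has closed range, whereas an injective one need not, so the adjoint-dual of ``surjective'' is ``bounded below'' (Theorem \ref{NT1}), not ``injective.'' Consequently, under the bounded reading of left invertibility --- the one the paper actually needs later, e.g.\ in Proposition \ref{p1}(d) where the left inverse $P$ must lie in $B(\vr)$ --- the $(\Rightarrow)$ direction requires the additional hypothesis that $\ra(A)$ is closed, and neither your construction nor the cited chain supplies it; under the purely algebraic reading your construction succeeds, but then Proposition \ref{NP2}(b) fails on the same example (there $A^\dagger=A$ is not surjective, hence not right invertible). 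Either way, the step from injectivity to a left inverse in $B(\vr)$ is a real gap --- one you identified, but did not repair.
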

\begin{proof}
From point (b) of proposition \ref{NP2}, point (b) of proposition \ref{IP30}, and proposition \ref{PS}, we have,
$A$ is left invertible $\Leftrightarrow$ $A^\dagger$ is right invertible $\Leftrightarrow$ $\ra(A^\dagger)=\vr$ $\Leftrightarrow$ $\kr(A)=\{0\}.$ This completes the proof.
\end{proof}
\subsection{Left Scalar Multiplications on $\vr$.}
We shall extract the definition and some properties of left scalar multiples of vectors on $\vr$ from \cite{ghimorper} as needed for the development of the manuscript. The left scalar multiple of vectors on a right quaternionic Hilbert space is an extremely non-canonical operation associated with a choice of preferred Hilbert basis. Since $\vr$ is a separable Hilbert space, $\vr$ has a Hilbert basis
\begin{equation}\label{b1}
\mathcal{O}=\{\varphi_{k}\,\mid\,k\in N\},
\end{equation}
where $N$ is a countable index set.
The left scalar multiplication on $\vr$ induced by $\mathcal{O}$ is defined as the map $\quat\times \vr\ni(\qu,\phi)\longmapsto \qu\phi\in \vr$ given by
\begin{equation}\label{LPro}
\qu\phi:=\sum_{k\in N}\varphi_{k}\qu\langle \varphi_{k}\mid \phi\rangle_{\vr},
\end{equation}
for all $(\qu,\phi)\in\quat\times \vr$.
\begin{proposition}\cite{ghimorper}\label{lft_mul}
	The left product defined in the equation \ref{LPro} satisfies the following properties. For every $\phi,\psi\in \vr$ and $\bfrakp,\qu\in\quat$,
	\begin{itemize}
		\item[(a)] $\qu(\phi+\psi)=\qu\phi+\qu\psi$ and $\qu(\phi\bfrakp)=(\qu\phi)\bfrakp$.
		\item[(b)] $\|\qu\phi\|_{\vr}=|\qu|\|\phi\|_{\vr}$.
		\item[(c)] $\qu(\bfrakp\phi)=(\qu\bfrakp)\phi$.
		\item[(d)] $\langle\overline{\qu}\phi\mid\psi\rangle_{\vr}
		=\langle\phi\mid\qu\psi\rangle_{\vr}$.
		\item[(e)] $r\phi=\phi r$, for all $r\in \mathbb{R}$.
		\item[(f)] $\qu\varphi_{k}=\varphi_{k}\qu$, for all $k\in N$.
	\end{itemize}
\end{proposition}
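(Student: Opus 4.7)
The plan is to verify each of the six identities by direct computation from the definition
$\qu\phi=\sum_{k\in N}\varphi_k\qu\langle\varphi_k\mid\phi\rangle_{\vr}$, using the orthonormality of $\mathcal O$, the defining properties (i)--(v) of the inner product, and the quaternionic Parseval identity $\|\phi\|_{\vr}^{2}=\sum_{k\in N}|\langle\varphi_k\mid\phi\rangle_{\vr}|^{2}$. A preliminary step, before addressing the individual items, is to check that the defining series converges in $\vr$; this follows from Parseval because the coefficients $\qu\langle\varphi_k\mid\phi\rangle_{\vr}$ satisfy $\sum_k|\qu\langle\varphi_k\mid\phi\rangle_{\vr}|^{2}=|\qu|^{2}\|\phi\|_{\vr}^{2}<\infty$, so the partial sums form a Cauchy sequence.

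For part (a), additivity in $\phi$ follows from property (iii) of the inner product together with right distributivity of quaternionic multiplication, and compatibility with right multiplication by $\bfrakp$ follows from property (iv): the coefficient $\langle\varphi_k\mid\phi\bfrakp\rangle_{\vr}=\langle\varphi_k\mid\phi\rangle_{\vr}\bfrakp$ can be factored out on the right, giving $(\qu\phi)\bfrakp$. For part (f), orthonormality collapses $\qu\varphi_k=\sum_j\varphi_j\qu\langle\varphi_j\mid\varphi_k\rangle_{\vr}$ to the single term $\varphi_k\qu$. Part (e) then follows because for real $r$ we may slide $r$ past $\langle\varphi_k\mid\phi\rangle_{\vr}$ to obtain $r\phi=\sum_k\varphi_k\langle\varphi_k\mid\phi\rangle_{\vr}r=\phi r$ by the reconstruction formula for the basis.

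For part (c), the computation
\begin{equation*}
\qu(\bfrakp\phi)=\sum_{j}\varphi_j\qu\Bigl\langle\varphi_j\,\Big|\,\sum_{k}\varphi_k\bfrakp\langle\varphi_k\mid\phi\rangle_{\vr}\Bigr\rangle_{\vr}
=\sum_{j}\varphi_j\qu\bfrakp\langle\varphi_j\mid\phi\rangle_{\vr}=(\qu\bfrakp)\phi
\end{equation*}
uses continuity of the inner product to exchange it with the inner sum and then orthonormality. For part (b), expanding $\langle\qu\phi\mid\qu\phi\rangle_{\vr}$ via (v) and (iv) produces $\sum_{j,k}\overline{\langle\varphi_k\mid\phi\rangle_{\vr}}\,\overline{\qu}\langle\varphi_k\mid\varphi_j\rangle_{\vr}\qu\langle\varphi_j\mid\phi\rangle_{\vr}$, which collapses by orthonormality and $\overline{\qu}\qu=|\qu|^2\in\mathbb R$ to $|\qu|^{2}\sum_k|\langle\varphi_k\mid\phi\rangle_{\vr}|^{2}=|\qu|^{2}\|\phi\|_{\vr}^{2}$. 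For (d), expand both sides: $\langle\overline{\qu}\phi\mid\psi\rangle_{\vr}=\sum_k\overline{\langle\varphi_k\mid\phi\rangle_{\vr}}\,\qu\,\langle\varphi_k\mid\psi\rangle_{\vr}$ using (v), and $\langle\phi\mid\qu\psi\rangle_{\vr}=\sum_k\langle\phi\mid\varphi_k\rangle_{\vr}\qu\langle\varphi_k\mid\psi\rangle_{\vr}$ using (iv); property (i) identifies the two expressions.

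The main obstacle, such as it is, lies in (b) and (d): the calculations manipulate double series with noncommutative coefficients, so one must carefully track left versus right positions of quaternionic scalars and use property (v) with $\overline{\qu}$ on the \emph{left} before applying orthonormality, and invoke continuity of $\langle\cdot\mid\cdot\rangle_{\vr}$ in each slot to justify interchanging it with the norm-convergent defining series. Once this bookkeeping is done carefully, every identity reduces to orthonormality and Parseval, and no deeper input is needed.
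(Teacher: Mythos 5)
Your verification is correct, but note that the paper itself offers no proof of this proposition at all: it is imported verbatim from \cite{ghimorper} (Ghiloni--Moretti--Perotti), so there is no in-paper argument to compare against. Your direct computation from the definition in equation \ref{LPro} is essentially the standard argument given in that reference: each item reduces to orthonormality of $\mathcal{O}$, the inner-product axioms (i)--(v), Parseval, and the multiplicativity of the quaternionic norm, and you correctly flag the only delicate points, namely the norm-convergence of the defining series and the use of continuity of $\langle\cdot\mid\cdot\rangle_{\vr}$ in each slot to interchange it with that series before invoking orthonormality. The noncommutative bookkeeping in (b) and (d) --- pulling conjugated coefficients out on the left via (v), unconjugated ones out on the right via (iv), and using that $\overline{\qu}\qu=|\qu|^2$ is real --- is exactly what makes these identities work, and you have it right. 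In short: a complete, self-contained proof of a statement the paper only cites.
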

\begin{remark}
	(1) The meaning of writing $\bfrakp\phi$ is $\bfrakp\cdot\phi$, because the notation from the equation \ref{LPro} may be confusing, when $\vr=\quat$. However, regarding the field $\quat$ itself as a right $\quat$-Hilbert space, an orthonormal basis $\mathcal{O}$ should consist only of a singleton, say $\{\varphi_{0}\}$, with $\mid\varphi_{0}\mid=1$, because we clearly have $\theta=\varphi_{0}\langle\varphi_{0}\mid\theta\rangle$, for all $\theta\in\quat$. The equality from (f) of proposition \ref{lft_mul} can be written as $\bfrakp\varphi_{0}=\varphi_{0}\bfrakp$, for all $\bfrakp\in\quat$. In fact, the left hand may be confusing and it should be understood as $\bfrakp\cdot\varphi_{0}$, because the true equality $\bfrakp\varphi_{0}=\varphi_{0}\bfrakp$ would imply that $\varphi_{0}=\pm 1$. For the simplicity, we are writing $\bfrakp\phi$ instead of writing $\bfrakp\cdot\phi$.\\
	(2) Also one can trivially see that $(\bfrakp+\qu)\phi=\bfrakp\phi+\qu\phi$, for all $\bfrakp,\qu\in\quat$ and $\phi\in \vr$.
\end{remark}
Furthermore, the quaternionic left scalar multiplication of linear operators is also defined in \cite{Fab1}, \cite{ghimorper}. For any fixed $\qu\in\quat$ and a given right linear operator $A:\vr\longrightarrow \vr$, the left scalar multiplication of $A$ is defined as a map $\qu A:\vr\longrightarrow \vr$ by the setting
\begin{equation}\label{lft_mul-op}
(\qu A)\phi:=\qu (A\phi)=\sum_{k\in N}\varphi_{k}\qu\langle \varphi_{k}\mid A\phi\rangle_{\vr},
\end{equation}
for all $\phi\in \vr$. It is straightforward that $\qu A$ is a right linear operator. We can define right scalar multiplication of the right linear operator $A:\vr\longrightarrow \vr$ as a map $ A\qu:\vr\longrightarrow \vr$ by the setting
\begin{equation}\label{rgt_mul-op}
(A\qu)\phi:=A(\qu \phi),
\end{equation}
for all $\phi\in \vr$. It is also right linear operator. One can easily see that
\begin{equation}\label{sc_mul_aj-op}
(\qu A)^{\dagger}=A^{\dagger}\overline{\qu}~\mbox{~and~}~
(A\qu)^{\dagger}=\overline{\qu}A^{\dagger}.
\end{equation}
\subsection{S-Spectrum}
For a given right linear operator $A:\D(A)\subseteq V_{\quat}^R\longrightarrow V_{\quat}^R$ and $\qu\in\quat$, we define the operator $R_{\qu}(A):\D(A^{2})\longrightarrow\quat$ by  $$R_{\qu}(A)=A^{2}-2\text{Re}(\qu)A+|\qu|^{2}\Iop,$$
where $\qu=q_{0}+\bi q_1 + \bj q_2 + \bk q_3$ is a quaternion, $\text{Re}(\qu)=q_{0}$  and $|\qu|^{2}=q_{0}^{2}+q_{1}^{2}+q_{2}^{2}+q_{3}^{2}.$\\
In the literature, the operator is called pseudo-resolvent since it is not the resolvent operator of $A$ but it is the one related to the notion of spectrum as we shall see in the next definition. For more information, on the notion of $S$-spectrum the reader may consult e.g. \cite{Fab, Fab1, NFC}, and  \cite{ghimorper}.
\begin{definition}
	Let $A:\D(A)\subseteq V_{\quat}^R\longrightarrow V_{\quat}^R$ be a right linear operator. The {\em $S$-resolvent set} (also called \textit{spherical resolvent} set) of $A$ is the set $\rho_{S}(A)\,(\subset\quat)$ such that the three following conditions hold true:
	\begin{itemize}
		\item[(a)] $\ker(R_{\qu}(A))=\{0\}$.
		\item[(b)] $\text{ran}(R_{\qu}(A))$ is dense in $V_{\quat}^{R}$.
		\item[(c)] $R_{\qu}(A)^{-1}:\text{ran}(R_{\qu}(A))\longrightarrow\D(A^{2})$ is bounded.
	\end{itemize}
	The \textit{$S$-spectrum} (also called \textit{spherical spectrum}) $\sigma_{S}(A)$ of $A$ is defined by setting $\sigma_{S}(A):=\quat\smallsetminus\rho_{S}(A)$. For a bounded linear operator $A$ we can write the resolvent set as
	\begin{eqnarray*}
		\rho_S(A)
		&=&\{\qu\in\quat~|~R_\qu(A)~\text{has an inverse in}~B(V_{\quat}^R)\}\\
		&=&\{\qu\in\quat~|~\text{ker}(R_\qu(A))=\{0\}\quad\text{and}\quad \text{ran}(R_\qu(A))=V_\quat^R\}
	\end{eqnarray*}
	and the spectrum can be written as
	\begin{eqnarray*}
		\sigma_S(A)&=&\quat\setminus\rho_S(A)\\
		&=&\{\qu\in\quat~|~R_\qu(A)~\text{has no inverse in}~B(V_{\quat}^R)\}\\
		&=&\{\qu\in\quat~|~\text{ker}(R_\qu(A))\not=\{0\}\quad\text{or}\quad \text{ran}(R_\qu(A))\not=V_\quat^R\}.
	\end{eqnarray*}
	The spectrum $\sigma_S(A)$ decomposes into three major disjoint subsets as follows:
	\begin{itemize}
		\item[(i)] the \textit{spherical point spectrum} of $A$: $$\sigma_{pS}(A):=\{\qu\in\quat~\mid~\ker(R_{\qu}(A))\ne\{0\}\}.$$
		\item[(ii)] the \textit{spherical residual spectrum} of $A$: $$\sigma_{rS}(A):=\{\qu\in\quat~\mid~\ker(R_{\qu}(A))=\{0\},\overline{\text{ran}(R_{\qu}(A))}\ne V_{\quat}^{R}~\}.$$
		\item[(iii)] the \textit{spherical continuous spectrum} of $A$: $$\sigma_{cS}(A):=\{\qu\in\quat~\mid~\ker(R_{\qu}(A))=\{0\},\overline{\text{ran}(R_{\qu}(A))}= V_{\quat}^{R}, R_{\qu}(A)^{-1}\notin B(V_{\quat}^{R}) ~\}.$$
	\end{itemize}
	If $A\phi=\phi\qu$ for some $\qu\in\quat$ and $\phi\in V_{\quat}^{R}\smallsetminus\{0\}$, then $\phi$ is called an \textit{eigenvector of $A$ with right eigenvalue} $\qu$. The set of right eigenvalues coincides with the point $S$-spectrum, see \cite{ghimorper}, proposition 4.5.
\end{definition}
\begin{proposition}\cite{Fab2, ghimorper}\label{PP1}
	For $A\in B(\vr)$, the resolvent set $\rho_S(A)$ is a non-empty open set and the spectrum $\sigma_S(A)$ is a non-empty compact set.
\end{proposition}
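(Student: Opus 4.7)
The plan is to adapt the classical three-step Banach-algebra proof to the quaternionic S-spectrum setting: (i) exhibit a ball outside of which every $\qu$ belongs to $\rho_S(A)$, which simultaneously shows $\rho_S(A)\neq\emptyset$ and $\sigma_S(A)$ is bounded; (ii) show $\rho_S(A)$ is open, so $\sigma_S(A)$ is closed and, combined with (i), compact; and (iii) establish non-emptiness of $\sigma_S(A)$ via a quaternionic Liouville-type argument. Note that since $\Re(\qu)$ and $|\qu|^2$ are real, the pseudo-resolvent $R_\qu(A)=A^2-2\text{Re}(\qu)A+|\qu|^2\Iop$ is an honest element of $B(\vr)$ without appeal to any left scalar multiplication, which simplifies step (i) and (ii).

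For step (i), I would write $R_\qu(A)=|\qu|^2\bigl(\Iop-B_\qu\bigr)$ with $B_\qu:=|\qu|^{-2}(2\text{Re}(\qu)A-A^2)$. Then
$$\|B_\qu\|\le \frac{2\|A\|}{|\qu|}+\frac{\|A\|^2}{|\qu|^2},$$
which is strictly less than $1$ whenever $|\qu|>(\sqrt{2}+1)\|A\|$. For such $\qu$, the standard Neumann series in $B(\vr)$ produces a bounded inverse $R_\qu(A)^{-1}=|\qu|^{-2}\sum_{n\ge 0}B_\qu^{n}$, so $\qu\in\rho_S(A)$. Hence $\rho_S(A)$ is non-empty and $\sigma_S(A)\subseteq\{\qu:|\qu|\le(\sqrt{2}+1)\|A\|\}$.

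For step (ii), fix $\qu_0\in\rho_S(A)$. The identity
$$R_\qu(A)-R_{\qu_0}(A)=-2(\text{Re}(\qu)-\text{Re}(\qu_0))A+(|\qu|^2-|\qu_0|^2)\Iop$$
shows the map $\qu\mapsto R_\qu(A)$ is norm-continuous from $\quat$ into $B(\vr)$. Because the set of invertible elements of $B(\vr)$ is open (a Neumann-series perturbation argument: if $T$ is invertible and $\|S-T\|<\|T^{-1}\|^{-1}$, then $S$ is invertible), there is a $\quat$-neighbourhood of $\qu_0$ contained in $\rho_S(A)$. Therefore $\sigma_S(A)=\quat\setminus\rho_S(A)$ is closed, hence compact.

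The main obstacle is step (iii). Assume for contradiction $\rho_S(A)=\quat$, and consider the left S-resolvent operator $S_L^{-1}(\qu,A):=-R_\qu(A)^{-1}(A-\overline{\qu}\Iop)$, now defined on all of $\quat$. Standard quaternionic functional-calculus results (see \cite{Fab,Fab1,NFC,ghimorper}) show that $\qu\mapsto S_L^{-1}(\qu,A)$ is slice regular and, for $|\qu|>\|A\|$, admits the convergent expansion $\sum_{n\ge 0}A^{n}\qu^{-n-1}$ (using the right scalar multiplication of operators), so in particular $\|S_L^{-1}(\qu,A)\|\to 0$ as $|\qu|\to\infty$. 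The Liouville theorem for slice regular $B(\vr)$-valued functions then forces $S_L^{-1}(\cdot,A)\equiv 0$, which is absurd. The hard part is really that last step, since it relies on the slice-regularity of the S-resolvent and a quaternionic Liouville theorem, rather than on elementary operator-theoretic manipulations; the preceding two steps are direct transcriptions of the complex Banach-algebra argument to the pseudo-resolvent $R_\qu(A)$.
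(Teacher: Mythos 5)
This proposition is one the paper does not prove at all: it is imported verbatim from the literature (the citation \cite{Fab2, ghimorper}), so there is no in-paper proof to compare against. Your three-step argument is correct and is essentially the standard proof given in those cited sources: (i) the Neumann-series factorization $R_\qu(A)=|\qu|^2(\Iop-B_\qu)$ is valid (note $\mathrm{Re}(\qu)$ and $|\qu|^2$ are real, so no left scalar multiplication is needed there, as you observe), and your bound $|\qu|>(\sqrt{2}+1)\|A\|$ is sufficient even though the sharper inclusion $\sigma_S(A)\subseteq\{\qu\in\quat:|\qu|\le\|A\|\}$ follows from the S-resolvent series; (ii) norm-continuity of $\qu\mapsto R_\qu(A)$ plus openness of the invertible group in the Banach algebra $B(\vr)$ (available here by Theorem \ref{ET0}) gives openness of $\rho_S(A)$, hence compactness of $\sigma_S(A)$. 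The only caveat is step (iii): it is not self-contained, since it imports the slice regularity of the left S-resolvent and a Liouville theorem for $B(\vr)$-valued slice regular functions. That is unavoidable — non-emptiness of the S-spectrum is genuinely the deep part of the statement, and it cannot be reduced to elementary operator manipulations — and it is precisely how the references \cite{Fab2} and \cite{NFC} argue, so your proposal faithfully reconstructs the proof that the paper's citation points to.
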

\begin{remark}\label{R1}
	For $A\in B(\vr)$, since $\sigma_S(A)$ is a non-empty compact set so is its boundary. That is, $\partial\sigma_S(A)=\partial\rho_S(A)\not=\emptyset$.
\end{remark}
\begin{proposition}\cite{Jo}\label{Jo} Let $A\in B(\vr)$. Then $\kr(R_\qu(A))\not=\{0\}$ if and only if $\qu$ is a right eigenvalue of $A$. In particular every right eigenvalue belongs to $\sigma_S(A)$.
\end{proposition}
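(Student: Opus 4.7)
The plan is to prove the two implications separately. For the forward direction, suppose $A\phi=\phi\qu$ with $\phi\neq 0$. Iterating yields $A^2\phi=\phi\qu^2$, and since $2\mathrm{Re}(\qu)$ and $|\qu|^2$ are real they commute past $\phi$ by Proposition~\ref{lft_mul}(e), so
$R_\qu(A)\phi=\phi\,(\qu^2-2\mathrm{Re}(\qu)\qu+|\qu|^2)$. The factor in parentheses vanishes identically, because $\qu^2-(\qu+\overline{\qu})\qu+\overline{\qu}\qu=0$. Hence $\phi\in\kr(R_\qu(A))\setminus\{0\}$.

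For the converse, take $0\neq\phi\in\kr(R_\qu(A))$, i.e.\ $A^2\phi=2\mathrm{Re}(\qu)A\phi-|\qu|^2\phi$. The key move is to manufacture an eigenvector out of $\phi$. Set $\psi:=A\phi-\phi\overline{\qu}$; using $2\mathrm{Re}(\qu)-\overline{\qu}=\qu$ together with $\overline{\qu}\qu=|\qu|^2$, a short computation shows $A\psi=\psi\qu$. If $\psi\neq 0$ we are done: $\qu$ is a right eigenvalue with eigenvector $\psi$.

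The main obstacle is the subcase $\psi=0$, which only gives $A\phi=\phi\overline{\qu}$, exhibiting $\overline{\qu}$ rather than $\qu$ as a right eigenvalue. The resolution exploits that $\overline{\qu}$ always lies in the quaternionic similarity class $[\qu]=\{s\qu s^{-1}:s\in\quat^*\}$: if $\qu\in\R$ then $\overline{\qu}=\qu$ and the conclusion is immediate; otherwise write $\qu=a+bI$ with $b\neq 0$ and $I$ a unit imaginary quaternion, and pick any unit imaginary $J$ anticommuting with $I$, so that $J\qu J^{-1}=\overline{\qu}$. Right linearity of $A$ then gives
$$A(\phi J)=(A\phi)J=\phi\,\overline{\qu}\,J=(\phi J)(J^{-1}\overline{\qu}J)=(\phi J)\qu,$$
so $\phi J\neq 0$ is a right eigenvector of $A$ with right eigenvalue $\qu$.

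The final ``in particular'' clause is now automatic: a right eigenvalue $\qu$ forces $\kr(R_\qu(A))\neq\{0\}$ by the forward direction, so $R_\qu(A)$ is not injective and hence not invertible in $B(\vr)$, placing $\qu\in\sigma_{pS}(A)\subseteq\sigma_S(A)$.
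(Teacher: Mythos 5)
Your proof is correct, but there is nothing in the paper to compare it against: Proposition \ref{Jo} is stated there as a quoted result from \cite{Jo}, and no proof is given in the manuscript. Your argument is essentially the standard one from that literature (see also \cite{ghimorper}, Proposition 4.5). Indeed, both of your directions are instances of the single identity
\[
R_\qu(A)\phi = A\psi-\psi\qu,\qquad \psi:=A\phi-\phi\overline{\qu},
\]
valid because the coefficients $2\re(\qu)$ and $|\qu|^2$ are real: it reduces membership of $\phi$ in $\kr(R_\qu(A))$ to the right-eigenvalue equation for $\psi$, and your degenerate case $\psi=0$ is absorbed by the observation that right eigenvalues occur in full similarity classes --- if $A\phi=\phi\overline{\qu}$ then $A(\phi s)=(\phi s)(s^{-1}\overline{\qu}s)$ for every $s\in\quat^*$ --- which is exactly your conjugation by $J$. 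Two points you leave implicit are harmless but worth stating: such a $J$ exists (any unit imaginary quaternion orthogonal to $I$ anticommutes with $I$), and $\phi J\neq 0$ because $J$ is invertible. With those remarks added, your proof is complete and self-contained, which is arguably an improvement on the paper's bare citation.
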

\begin{definition}\cite{Fr}\label{D1}
	Let $A\in B(V_\quat^R)$. The {\em approximate S-point spectrum} of $A$, denoted by $\apo(A)$, is defined as
	$$\apo(A)=\{\qu\in\quat~~|~~\text{there is a sequence}~~\{\phi_n\}_{n=1}^{\infty}~~\text{such that}~~\|\phi_n\|=1~~\text{and}~~\|R_\qu(A)\phi_n\|\longrightarrow 0\}.$$
\end{definition}
\begin{proposition}\label{P3}\cite{Fr}
	Let $A\in B(\vr)$, then $\sigma_{pS}(A)\subseteq\apo(A)$. 
\end{proposition}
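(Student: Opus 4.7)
The plan is to show directly that any $\qu$ in the point $S$-spectrum admits a witnessing sequence for membership in the approximate $S$-point spectrum, taking that sequence to be constant.

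First, I would fix $\qu \in \sigma_{pS}(A)$. By definition, $\ker(R_\qu(A)) \neq \{0\}$, so I can pick some nonzero $\phi \in \ker(R_\qu(A))$. Because the norm on $\vr$ is a genuine real norm (property (ii) of the inner product), I can normalize by setting $\psi = \phi / \|\phi\|_{\vr}$, obtaining $\|\psi\|_{\vr} = 1$ and (by right $\quat$-linearity of $R_\qu(A)$, noting that $1/\|\phi\|_{\vr}$ is a real scalar) $R_\qu(A)\psi = 0$.

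Next, I would define the constant sequence $\phi_n := \psi$ for all $n \geq 1$. Then $\|\phi_n\|_{\vr} = 1$ for every $n$, and $\|R_\qu(A)\phi_n\|_{\vr} = 0$ for every $n$, which certainly converges to $0$ as $n \to \infty$. This exhibits the sequence required in Definition \ref{D1}, hence $\qu \in \apo(A)$. Since $\qu \in \sigma_{pS}(A)$ was arbitrary, the desired inclusion follows.

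There is no real obstacle here; the only subtle point is verifying that scaling by a real number commutes with the right linear operator $R_\qu(A)$, which is immediate since $R_\qu(A) = A^2 - 2\mathrm{Re}(\qu)A + |\qu|^2 \Iop$ is right $\quat$-linear and real scalars commute with every quaternion.
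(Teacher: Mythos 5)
Your proof is correct and takes the standard route: the paper itself states Proposition \ref{P3} without proof (it is quoted from \cite{Fr}), and the canonical argument is exactly the one you give — normalize a nonzero vector in $\ker(R_\qu(A))$, which is legitimate since the real scalar $1/\|\phi\|_{\vr}$ passes through the right linear operator $R_\qu(A)$, and use the constant sequence as the witness required by Definition \ref{D1}. Note also that since $A\in B(\vr)$, the operator $R_\qu(A)$ is defined on all of $\vr$, so there are no domain issues with the kernel vector.
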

\begin{definition}\cite{Fr,Kato}\label{DC}
	The spherical compression spectrum of an operator $A\in B(\vr)$, denoted by $\sigma_c^S(A)$, is defined as
	$$\sigma_c^S(A)=\{\qu\in\quat~~|~~\text{ran}(R_\qu(A))~~~\text{is not dense in}~~\vr~\}.$$
\end{definition}
\begin{definition}\cite{Kato}\label{su} Let $A\in B(\vr)$. The surjectivity S-spectrum of $A$ is defined as
	$$\sus(A)=\{\qu\in\quat~|~\text{ran}(R_{\qu}(A)\not=\vr\}.$$
\end{definition}
Clearly we have
\begin{equation}\label{sue1}
\sigma_c^S(A)\subseteq\sus(A)\quad\text{and}\quad\sigma_S(A)=\sigma_{pS}(A)\cup\sus(A).
\end{equation}
\begin{proposition}\label{su1}\cite{Fr} Let $A\in B(\vr)$. Then $A$ has the following properties.
	\begin{enumerate}
		\item[(a)]$\sigma_{pS}(A)\subseteq\sigma_c^S(A^\dagger)~~\text{and}~~\sigma_c^S(A)=\sigma_{pS}(A^\dagger)$.
		\item[(b)]$\sus(A)=\apo(A^\dagger)~~\text{and}~~\apo(A)=\sus(A^\dagger).$
		\item[(c)]$\sigma_S(A)=\sigma_S(A^\dagger).$
	\end{enumerate}
\end{proposition}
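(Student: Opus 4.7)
The plan hinges on one simple but crucial observation: since $\re(\qu)$ and $|\qu|^{2}$ are real scalars, applying Proposition \ref{da} term by term to $R_{\qu}(A)=A^{2}-2\re(\qu)A+|\qu|^{2}\Iop$ yields the identity $R_{\qu}(A)^{\dagger}=R_{\qu}(A^{\dagger})$. With this in hand, all three parts reduce to the adjoint/range/kernel dictionary of Proposition \ref{IP30} combined with the invertibility characterizations of Theorem \ref{NT1} and Propositions \ref{NP2}, \ref{PS}, \ref{PI}.

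For part (a), I would chase the definitions: $\qu\in\sigma_{c}^{S}(A)\Leftrightarrow\overline{\ra(R_{\qu}(A))}\neq\vr\Leftrightarrow\ra(R_{\qu}(A))^{\perp}\neq\{0\}\Leftrightarrow\kr(R_{\qu}(A^{\dagger}))\neq\{0\}\Leftrightarrow\qu\in\sigma_{pS}(A^{\dagger})$, where the third step uses (a) of Proposition \ref{IP30} together with the adjoint identity. This yields $\sigma_{c}^{S}(A)=\sigma_{pS}(A^{\dagger})$. The dual chain via (b) of Proposition \ref{IP30}, namely $\kr(R_{\qu}(A))=\ra(R_{\qu}(A^{\dagger}))^{\perp}$, gives $\sigma_{pS}(A)=\sigma_{c}^{S}(A^{\dagger})$, from which the stated inclusion is immediate.

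For part (b), the strategy is to rephrase $\apo$ and $\sus$ in terms of one-sided invertibility. Indeed, $\qu\notin\apo(A)$ means $R_{\qu}(A)$ is bounded below, which by Theorem \ref{NT1} is equivalent to $R_{\qu}(A)$ being injective with closed range; composing the bounded inverse on $\ra(R_{\qu}(A))$ with the orthogonal projection onto that closed subspace produces a bounded left inverse, so $R_{\qu}(A)$ is left invertible. By Proposition \ref{NP2}(b) this is equivalent to $R_{\qu}(A^{\dagger})$ being right invertible, and by Proposition \ref{PS} to $R_{\qu}(A^{\dagger})$ being surjective, i.e.\ $\qu\notin\sus(A^{\dagger})$. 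This yields $\apo(A)=\sus(A^{\dagger})$; swapping the roles of $A$ and $A^{\dagger}$ gives $\sus(A)=\apo(A^{\dagger})$.

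For part (c), I would first upgrade the decomposition \eqref{sue1} to $\sigma_{S}(A)=\apo(A)\cup\sus(A)$. The inclusion $\supseteq$ is immediate: whenever $R_{\qu}(A)$ is either not bounded below or not surjective it cannot be invertible in $B(\vr)$. The reverse inclusion follows from \eqref{sue1} together with $\sigma_{pS}(A)\subseteq\apo(A)$ supplied by Proposition \ref{P3}. Applying (b) then delivers $\sigma_{S}(A)=\apo(A)\cup\sus(A)=\sus(A^{\dagger})\cup\apo(A^{\dagger})=\sigma_{S}(A^{\dagger})$. The only genuinely delicate step in the whole plan is the equivalence ``bounded below $\Leftrightarrow$ left invertible'' in the quaternionic setting, for which Theorem \ref{NT1} plus the short projection argument above is required; everything else is bookkeeping once $R_{\qu}(A)^{\dagger}=R_{\qu}(A^{\dagger})$ is in place.
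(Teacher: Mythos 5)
Your proposal is correct, but there is no proof in the paper to compare it with: Proposition \ref{su1} is imported verbatim from \cite{Fr} and stated without proof, so your argument can only be judged on its own merits against the paper's preliminaries, and on those terms it holds up. The pivotal identity $R_\qu(A)^\dagger=R_\qu(A^\dagger)$ is indeed available from Proposition \ref{da} because the coefficients $2\re(\qu)$ and $|\qu|^{2}$ are real, so the twisted adjoint rule \ref{sc_mul_aj-op} degenerates to $(rA)^\dagger=rA^\dagger$; with that in hand, your part (a) is a clean application of Proposition \ref{IP30} together with $\overline{M}\neq\vr\Leftrightarrow M^\perp\neq\{0\}$, and in fact it yields the equality $\sigma_{pS}(A)=\sigma_c^S(A^\dagger)$, which is stronger than the inclusion asserted in the statement. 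In part (b), building the bounded left inverse as (inverse on the closed range) composed with the orthogonal projection is the right move, and is safer than quoting Proposition \ref{PI}, whose statement glosses over precisely the closed-range issue your construction handles; the one step you should make explicit is the trivial converse, namely that a bounded left inverse $L$ with $LR_\qu(A)=\Iop$ forces $\|\phi\|_{\vr}\leq\|L\|\,\|R_\qu(A)\phi\|_{\vr}$, so that ``bounded below'' and ``left invertible'' are genuinely equivalent and every link in your chain $\apo(A)^{c}\leftrightarrow\sus(A^\dagger)^{c}$ is a two-way implication; the swap $A\leftrightarrow A^\dagger$ then needs only $(A^\dagger)^\dagger=A$ for bounded operators. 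Part (c), upgrading \eqref{sue1} to $\sigma_S(A)=\apo(A)\cup\sus(A)$ via Proposition \ref{P3} and then applying part (b), is correct as written.
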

\begin{proposition}\label{P4}\cite{Fr}
	If $A\in B(\vr)$ and $\qu\in\quat$, then the following statements are equivalent.
	\begin{enumerate}
		\item[(a)] $\qu\not\in\apo(A).$
		\item[(b)] $\text{ker}(R_\qu(A))=\{0\}$ and $\text{ran}(R_\qu(A))$ is closed.
		\item[(c)] There exists a constant $c\in\R$, $c>0$ such that $\|R_\qu(A)\phi\|\geq c\|\phi\|$ for all $\phi\in\D(A^2)$.
	\end{enumerate}
\end{proposition}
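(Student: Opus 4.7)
The plan is to prove (a) $\Leftrightarrow$ (c) directly from the definition of $\apo(A)$, and (b) $\Leftrightarrow$ (c) by a one-line invocation of the Bounded Inverse Theorem (Theorem \ref{NT1}) applied to the pseudo-resolvent $R_\qu(A)$.

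First I would observe that since $A \in B(\vr)$ and the coefficients $2\re(\qu)$ and $|\qu|^2$ are real, the pseudo-resolvent $R_\qu(A) = A^2 - 2\re(\qu)A + |\qu|^2 \Iop$ itself lies in $B(\vr)$, and in particular $\D(A^2) = \vr$. The one-sided module structure causes no interference here because the scalar coefficients are real, so $R_\qu(A)$ is a genuine bounded right linear operator to which Theorem \ref{NT1} may be applied directly.

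For (a) $\Leftrightarrow$ (c) I would argue both directions by contrapositive. If no $c > 0$ satisfies (c), then for each $n \in \N$ there is a nonzero $\psi_n \in \vr$ with $\|R_\qu(A)\psi_n\| < \tfrac{1}{n}\|\psi_n\|$; setting $\phi_n = \psi_n \|\psi_n\|^{-1}$ yields a unit-norm sequence with $\|R_\qu(A)\phi_n\| \to 0$, placing $\qu$ in $\apo(A)$. Conversely, any witnessing sequence for $\qu \in \apo(A)$ forces every candidate lower bound $c > 0$ to fail once $n$ is large enough. For (b) $\Leftrightarrow$ (c), condition (c) states exactly that $R_\qu(A)$ is bounded below, which by the equivalence (b) $\Leftrightarrow$ (c) of Theorem \ref{NT1} (with $R_\qu(A)$ in the role of $A$) is the same as saying that $R_\qu(A)$ is injective with closed range, i.e.\ condition (b).

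The argument is essentially routine; no genuine obstacle appears, since the real, bounded character of the coefficients of $R_\qu(A)$ sidesteps all the quaternionic one-sidedness subtleties flagged in the introduction. The only point requiring attention is the initial verification that $R_\qu(A) \in B(\vr)$ and that right linearity is preserved when normalizing by the real scalar $\|\psi_n\|^{-1}$; once this is in place, the three equivalences follow without further effort.
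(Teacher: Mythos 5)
Your proposal is correct. Note that the paper itself offers no proof of Proposition \ref{P4}: it is imported verbatim from the reference \cite{Fr}, so there is nothing in this document to compare against line by line. Your argument is exactly the standard one that the cited result rests on, and it is carried out soundly here: the equivalence (a) $\Leftrightarrow$ (c) is the definitional unwinding of $\apo(A)$ (with the correct observation that a witness $\psi_n$ of the failure of a lower bound is necessarily nonzero, and that normalizing by the real scalar $\|\psi_n\|^{-1}$ is harmless for right linearity), and (b) $\Leftrightarrow$ (c) is precisely the equivalence of being bounded below with being injective with closed range, i.e.\ Theorem \ref{NT1} applied to $R_\qu(A)$, which is legitimate since $R_\qu(A)\in B(\vr)$ because its coefficients $2\re(\qu)$ and $|\qu|^2$ are real and $A$ is bounded, so $\D(A^2)=\vr$. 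No gap.
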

\begin{theorem}\cite{ghimorper}\label{ET0}
	Let $\vr$ be a right quaternionic Hilbert space equipped with a left scalar multiplication. Then the set $B(\vr)$ equipped with the point-wise sum, with the left and right scalar multiplications defined in equations \ref{lft_mul-op} and \ref{rgt_mul-op}, with the composition as product, with the adjunction $A\longrightarrow A^\dagger$, as in \ref{Ad1}, as $^*-$ involution and with the norm defined in \ref{PE1}, is a quaternionic two-sided Banach $C^*$-algebra with unity $\Iop$.
\end{theorem}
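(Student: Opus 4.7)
The plan is to verify the axioms of a quaternionic two-sided Banach $C^*$-algebra in sequence, relying on the structural results already collected in Propositions \ref{da} and \ref{lft_mul}, equation \ref{sc_mul_aj-op}, and the definitions \ref{lft_mul-op}, \ref{rgt_mul-op} of the scalar multiplications. First I would check the two-sided quaternionic vector space structure. Pointwise addition is obviously closed in $B(\vr)$. The delicate point is that the scalar multiples $\qu A$ and $A\qu$ defined through \ref{lft_mul-op} and \ref{rgt_mul-op} really do return bounded right linear operators: right linearity is immediate from item (a) of Proposition \ref{lft_mul} together with the right linearity of $A$, and boundedness with $\|\qu A\| = \|A\qu\| = |\qu|\|A\|$ follows from item (b). The two-sided axioms such as $\qu(\bfrakp A) = (\qu\bfrakp)A$, $(\qu+\bfrakp)A = \qu A + \bfrakp A$, and the interchange $\qu(A\bfrakp) = (\qu A)\bfrakp$ are then translations of the corresponding identities in Proposition \ref{lft_mul}.

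Next I would settle the Banach algebra and involutive structure. Submultiplicativity $\|AB\|\leq \|A\|\|B\|$ is immediate from \ref{PE1}, and completeness follows by the classical argument: for a Cauchy sequence $\{A_n\}$ in $B(\vr)$ the pointwise limit $A\phi := \lim_n A_n\phi$ exists for every $\phi\in\vr$, inherits right linearity from each $A_n$ and boundedness from the fact that $\{\|A_n\|\}$ is Cauchy in $\R$, and a standard $\eps/3$ estimate gives $\|A_n - A\|\to 0$. Compatibility of the product with the scalar multiplications, such as $\qu(AB) = (\qu A)B$ and $(A\qu)B = A(\qu B)$, follows from \ref{lft_mul-op}, \ref{rgt_mul-op}, and item (c) of Proposition \ref{lft_mul}. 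For the $^*$-involution, $(A^\dagger)^\dagger = A$ is immediate from \ref{Ad1}, the identities $(A+B)^\dagger = A^\dagger+B^\dagger$ and $(AB)^\dagger = B^\dagger A^\dagger$ are Proposition \ref{da}, and the quaternionic compatibility $(\qu A)^\dagger = A^\dagger\overline{\qu}$, $(A\qu)^\dagger = \overline{\qu}A^\dagger$ is equation \ref{sc_mul_aj-op}.

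The only real computation — and the step I expect to be the main obstacle — is the $C^*$-identity $\|A^\dagger A\| = \|A\|^2$, together with the isometry $\|A^\dagger\| = \|A\|$ which drops out of the same bootstrap. Starting from
\[
\|A\phi\|_{\vr}^2 = \langle A\phi \mid A\phi\rangle_{\vr} = \langle A^\dagger A\phi \mid \phi\rangle_{\vr} \leq \|A^\dagger A\phi\|_{\vr}\,\|\phi\|_{\vr} \leq \|A^\dagger A\|\,\|\phi\|_{\vr}^2,
\]
taking the supremum over unit vectors gives $\|A\|^2 \leq \|A^\dagger A\| \leq \|A^\dagger\|\,\|A\|$ by submultiplicativity, hence $\|A\| \leq \|A^\dagger\|$; applying the same chain to $A^\dagger$ in place of $A$ yields the reverse inequality, and then all the inequalities collapse to equalities, giving simultaneously $\|A^\dagger\| = \|A\|$ and $\|A^\dagger A\| = \|A\|^2$. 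The subtlety to watch, because of non-commutativity, is that one must invoke the quaternionic Cauchy--Schwarz inequality $|\langle\phi\mid\psi\rangle_{\vr}|\leq\|\phi\|_{\vr}\,\|\psi\|_{\vr}$ in the form appropriate to a right quaternionic inner product, and verify that $\langle A\phi\mid A\phi\rangle_{\vr} = \langle A^\dagger A\phi\mid \phi\rangle_{\vr}$ follows cleanly from \ref{Ad1} with $\psi$ chosen as $A\phi$, a substitution that is licit only because $A\phi\in\D(A^\dagger)=\vr$.
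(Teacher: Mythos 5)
The first thing to note is that the paper contains no proof of this statement: Theorem \ref{ET0} is imported verbatim from \cite{ghimorper}, with only a one-line remark following it (that dropping the left scalar multiplication still leaves a real Banach $C^*$-algebra). So there is no internal proof to compare against; your proposal fills that gap, and it does so along what is essentially the standard route, which is also the route taken in \cite{ghimorper}: verify the two-sided module axioms, the Banach algebra axioms (submultiplicativity and completeness), the $^*$-involution axioms via Proposition \ref{da} and equation \ref{sc_mul_aj-op}, and finish with the $C^*$-identity bootstrap. Your argument is correct, and you correctly identify the $C^*$-identity as the only step requiring genuine computation. Two glosses are worth tightening, though neither is a genuine gap. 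First, item (b) of Proposition \ref{lft_mul} gives $\|\qu A\|=|\qu|\,\|A\|$ immediately, but for the right multiple it only gives $\|(A\qu)\phi\|_{\vr}=\|A(\qu\phi)\|_{\vr}\le|\qu|\,\|A\|\,\|\phi\|_{\vr}$; to get the equality $\|A\qu\|=|\qu|\,\|A\|$ you also need that $\phi\mapsto\qu\phi$ maps the unit sphere bijectively onto the sphere of radius $|\qu|$ (for $\qu\ne 0$), which follows from items (c) and (e) of Proposition \ref{lft_mul}. Second, your bootstrap uses submultiplicativity in the form $\|A^\dagger A\|\le\|A^\dagger\|\,\|A\|$, which tacitly presupposes that $A^\dagger$ is bounded; this should be established first, for instance by taking $\phi=A^\dagger\psi$ in \ref{Ad1} to obtain $\|A^\dagger\psi\|_{\vr}^2=\langle\psi\mid AA^\dagger\psi\rangle_{\vr}\le\|A\|\,\|A^\dagger\psi\|_{\vr}\,\|\psi\|_{\vr}$, hence $\|A^\dagger\|\le\|A\|<\infty$, after which your chain of inequalities collapses exactly as you describe and yields both $\|A^\dagger\|=\|A\|$ and $\|A^\dagger A\|=\|A\|^2$.
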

	One can observe that in the above theorem, if the left scalar multiplication is left out on $\vr$, then $B(\vr)$ becomes a real Banach $C^*$-algebra with unity $\Iop$.
\section{Berberian extension in the quaternionic setting}
Following the definition given in \cite{Ber} for complex bounded sequences, we denote by $\gli$ a {\em Banach generalized limit}  defined for bounded sequences $\{\qu_n\}\subseteq\quat$ with the following properties. For $\qu\in\quat$ and $\{\qu_n\}, \{\pu_n\}\subseteq\quat$,
\begin{enumerate}
	\item [(a)] $\gli(\qu_n+\pu_n)=\gli(\qu_n)+\gli(\pu_n)$;
	\item[(b)] $\gli(\qu_n\qu)=\gli(\qu_n)\,\qu$;
	\item[(c)] $\gli(\qu\qu_n)=\qu\,\gli(\qu_n)$;
	\item[(d)] $\displaystyle\gli(\qu_n)=\lim_{n\rightarrow\infty}\qu_n$ whenever $\{\qu_n\}$ is convergent;
	\item[(e)] $\gli(\qu_n)\geq 0$ when $\{\qu_n\}\subseteq\R$ and $\qu_n\geq 0$ for all $n$.
\end{enumerate}
$\gli$ defines a positive linear form on the vector space $\M$ of all quaternionic bounded sequences and  $c_0$ denotes the set of quaternionic null sequences, that is, sequences that converge to zero, and has the value $1$ for the constant sequence $\{1\}$. From properties (a) and (e) of $\gli$, $\gli(\qu_n)$ is real whenever $\qu_n$ is real for all $n$. Hence $\gli(\oqu_n)=\overline{\gli(\qu_n)}$ for any bounded sequence $\{\qu_n\}\subseteq\quat$.
\subsection{An extension of $\vr$}~
Let
$$\B=\left\{s=\{\phi_n\}~\vert~ \{\phi_n\}\subseteq\vr,~~\|\phi_n\|_{\vr}<\infty~\forall n,~\text{that is,}~~\{\|\phi_n\|_{\vr}\}\in\M\right\}.$$
If $s=\{\phi_n\}$ and $t=\{\psi_n\}$ write $s=t$ whenever $\phi_n=\psi_n$ for all $n$. Also
$$s+t=\{\phi_n+\psi_n\}\quad\text{and}\quad s\qu=\{\phi_n\qu\},$$
with these operations $\B$ becomes a quaternionic right linear vector space.
The left scalar multiplication, on $\B$, is defined as the map $\quat\times \B\ni(\qu,s)\longmapsto \qu s\in \B$ given by
\begin{equation}\label{LProB}
	\qu s:=\{\q\phi_n\},
\end{equation}
for all $(\qu,s)=(\qu,\{\phi_n\})\in\quat\times \B$, where for each $n\in\mathbb{N}, \q\phi_n$ is given by the definition \ref{LPro}.
Suppose that $s=\{\phi_n\}, t=\{\psi_n\}\in\B$. Since
$$|\langle\phi_n|\psi_n\rangle_{\vr}|\leq\|\phi_n\|_{\vr}~\|\psi_n\|_{\vr},\quad\text{for all}~n,$$
it is permissible to define
$$\Phi(s,t)=\gli(\langle\phi_n|\psi_n\rangle_{\vr}).$$
We have the following properties for $\Phi$.
\begin{enumerate}
	\item [(a)] Since $\langle\phi_n|\psi_n\rangle_{\vr}=\overline{\langle\psi_n|\phi_n\rangle_{\vr}},$ we have $\Phi(s,t)=\overline{\Phi(t,s)}$. That is, $\Phi$ is symmetric.
	\item[(b)] Since $\langle\phi_n|\phi_n\rangle_{\vr}\geq 0$ for all $n$, $\Phi(s,s)\geq 0$ for all $s\in\B$. That is, $\Phi$ is positive.
	\item[(c)] $\Phi$ is a bilinear functional, in the sense that $\Phi$ is left-antilinear with respect to the first variable,
	$$\Phi(r\pu+s\qu,t)=\overline{\pu}\Phi(r,t)+\overline{\qu}\Phi(s,t),\quad\text{for all}~\pu,\qu\in\quat\text{~~and~~}r,s,t\in\B,$$
	and $\Phi$ is right-linear with respect to the second variable,
	 $$\Phi(s,r\pu+t\qu)=\Phi(s,r)\pu+\Phi(s,t)\qu,\quad\text{for all}~\pu,\qu\in\quat\text{~~and~~}r,s,t\in\B.$$  
\end{enumerate}
From the Schwarz's inequality we have
$$|\Phi(s,t)|^2\leq \Phi(s,s)\Phi(t,t).$$
Let 
$$\mathfrak{N}=\{s\in\B~\vert~\Phi(s,s)=0\}=\{s\in\B~\vert~\Phi(s,t)=0~~\forall~t\in\B\}.$$
Clearly $\mathfrak{N}$ is a right linear subspace of $\B$. Write $[s]=s+\mathfrak{N}$ for a coset. The quotient right linear vector space $\mathfrak{P}=\B/\mathfrak{N}$ becomes an inner product space by defining
	$$\langle[s]~|~[t]\rangle_{\mfp}=\Phi(s,t).$$
If $u=\{[\phi_n]\}=\{\phi_n\}+\mathfrak{N}$ and $v=\{[\psi_n]\}=\{\psi_n\}+\mathfrak{N}$, then 
\begin{equation}\label{inneq}
\langle u~|~v\rangle_{\mfp}=\langle [\phi_n]~|~[\psi_n]\rangle_{\mfp}=\gli\langle\phi_n~|~\psi_n\rangle_{\vr}.
\end{equation}
Using the left scalar multiplication defined on $\B$, by the equation \ref{LProB}, we can define a left scalar multiplication on $\mathfrak{P}$ by the map $\quat\times \mathfrak{P}\ni(\qu,s)\longmapsto \qu [s]\in \mathfrak{P}$ given by
\begin{equation}\label{LProBN}
\qu [s]:=\qu s+\mathfrak{N},
\end{equation}
for all $(\qu,[s])=(\qu,s+\mathfrak{N})\in\quat\times \mathfrak{P}$. Following proposition provides some properties of the above defined left scalar multiplication:
\begin{proposition}\label{lft_mulBN}
	The left product defined in the equation \ref{LProBN} satisfies the following properties. For every $[s],[t]\in \mfp$ and $\bfrakp,\qu\in\quat$,
	\begin{itemize}
		\item[(a)] $\qu([s]+[t])=\qu[s]+\qu[t]$ and $\qu([s]\bfrakp)=(\qu[s])\bfrakp$.
		\item[(b)] $\|\,\qu[s]\,\|_{\mfp}=|\qu|\|\,[s]\,\|_{\mfp}$.
		\item[(c)] $\qu(\bfrakp[s])=(\qu\bfrakp)[s]$.
		\item[(d)] $\langle\overline{\qu}[s]\mid[t]\rangle_{\mfp}
		=\langle[s]\mid\qu[t]\rangle_{\mfp}$.
		\item[(e)] $r[s]=[s] r$, for all $r\in \mathbb{R}$.
	\end{itemize}
\end{proposition}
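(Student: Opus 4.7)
The plan is to lift each of (a)--(e) from the corresponding property on $\vr$ given by Proposition \ref{lft_mul}, by passing through the componentwise definition (\ref{LProB}) on $\B$ and then descending to the quotient $\mfp = \B/\fn$. The one genuinely non-trivial step is to verify that the formula $\qu[s] := \qu s + \fn$ from (\ref{LProBN}) does not depend on the representative: if $s = \{\phi_n\}$ and $s' = \{\phi_n'\}$ satisfy $s-s' \in \fn$, then $\gli(\|\phi_n-\phi_n'\|_{\vr}^{2}) = 0$, and by part (b) of Proposition \ref{lft_mul} we have $\|\qu\phi_n - \qu\phi_n'\|_{\vr} = |\qu|\,\|\phi_n-\phi_n'\|_{\vr}$, so that $\gli(\|\qu(\phi_n-\phi_n')\|_{\vr}^{2}) = |\qu|^{2}\gli(\|\phi_n-\phi_n'\|_{\vr}^{2}) = 0$, giving $\qu s - \qu s' \in \fn$. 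This same estimate, combined with the corresponding property on $\B$, immediately yields (b) in the form $\|\qu[s]\|_{\mfp}^{2} = \gli(|\qu|^{2}\|\phi_n\|_{\vr}^{2}) = |\qu|^{2}\|[s]\|_{\mfp}^{2}$.

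Each of the remaining items reduces to applying $\gli$ or passing to the coset of a termwise identity in $\vr$. For (a), the identities $\qu(\phi_n+\psi_n) = \qu\phi_n + \qu\psi_n$ and $\qu(\phi_n\bfrakp) = (\qu\phi_n)\bfrakp$ hold by part (a) of Proposition \ref{lft_mul}, and taking cosets gives the two assertions in (a); here one uses that $\fn$ is a right $\quat$-subspace of $\B$ so that right multiplication by $\bfrakp$ is well-defined on $\mfp$. Property (c) follows verbatim from $\qu(\bfrakp\phi_n) = (\qu\bfrakp)\phi_n$ (Proposition \ref{lft_mul}(c)), and property (e) from $r\phi_n = \phi_n r$ for $r \in \R$ (Proposition \ref{lft_mul}(e)). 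For (d), Proposition \ref{lft_mul}(d) gives $\langle \overline{\qu}\phi_n \mid \psi_n\rangle_{\vr} = \langle \phi_n \mid \qu\psi_n\rangle_{\vr}$ for each $n$; applying $\gli$ and using the definition (\ref{inneq}) of the inner product on $\mfp$ yields $\langle \overline{\qu}[s] \mid [t]\rangle_{\mfp} = \langle [s] \mid \qu[t]\rangle_{\mfp}$.

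The main obstacle, as noted, is the well-definedness check, because left scalar multiplication on $\vr$ is defined via a non-canonical Hilbert basis $\mathcal{O}$ and must be shown to interact compatibly with the quotient by $\fn$; but this is precisely what the isometry property $\|\qu\phi\|_{\vr} = |\qu|\,\|\phi\|_{\vr}$ of Proposition \ref{lft_mul}(b) guarantees. Once well-definedness is in hand, all five assertions are immediate consequences of the analogous termwise identities on $\vr$ together with the additivity and quaternionic homogeneity properties (a)--(c) of $\gli$.
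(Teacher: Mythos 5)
Your proof is correct and follows essentially the same route as the paper, which simply observes that the properties descend from Proposition \ref{lft_mul} through the definitions in equations \ref{LProB} and \ref{LProBN}. Your explicit verification that $\qu[s] := \qu s + \fn$ is independent of the representative (via the isometry $\|\qu\phi\|_{\vr} = |\qu|\,\|\phi\|_{\vr}$) is a detail the paper leaves implicit, but it does not change the approach.
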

\begin{proof}
The proof immediately follows from the proposition \ref{lft_mul} together with the equations \ref{LProB} and \ref{LProBN}.
\end{proof}
Let $\phi\in\vr$, we write $\{\phi\}$ for the sequence all of whose terms are $\phi$ and $\phi'$ for the coset $\{[\phi]\}=\{\phi\}+\mathfrak{N}$. Evidently
$$\langle[\phi]~|~[\psi]\rangle_{\mfp}=\langle\phi|\psi\rangle_{\vr},$$
and $\phi\mapsto[\phi]$ is an isometric right linear mapping of $\vr$ onto a closed linear subspace ${\vr}'$ of $\mathfrak{P}$. Regard $\mathfrak{P}$ as a linear subspace of its Hilbert space completion $\HI$. Then ${\vr}'$ is a closed linear subspace of $\HI$ and $\mathfrak{P}$ is a dense linear subspace of $\HI$.
\subsection{A representation of $B(\vr)$} Every operator $A$ in $\vr$ determines an operator $A^\circ$ in $\HI$ as follows.\\
If $s=\{\phi_n\}\in\B$ then the relation $\|A\phi_n\|_{\vr}\leq\|A\|~\|\phi_n\|_{\vr}$ shows that $\{A\phi_n\}\in\B$. Define
$$A_0:\B\longrightarrow\B\quad \text{by}\quad A_0s=\{A\phi_n\},$$
then $A_0$ is a right linear mapping such that
$$\Phi(A_0s,A_0s)\leq \|A\|\Phi(s,s).$$
In particular, if $s\in\mathfrak{N}$, that is $\Phi(s,s)=0$, then $A_0s\in\mathfrak{N}$. it follows that
\begin{equation}\label{ex}
A^{\circ}:\mathfrak{P}\longrightarrow\mathfrak{P}\quad \text{by}\quad \{[\phi_n]\}\mapsto\{[A\phi_n]\}
\end{equation}
is a well-defined right linear map. Thus 
$$A^\circ s'=(A_0s)'$$
and the inequality
$$\langle A^\circ u|A^\circ u\rangle_{\mfp}\leq\|A\|^2\langle u|u\rangle_{\mfp}$$
is valid for all $u\in\mathfrak{P}$. That is, $\|A^\circ u\|_{\mfp}\leq\|A\|\|u\|_{\mfp}$, for all $u\in\mathfrak{P}$. Hence $A^\circ$ is bounded (continuous), and $\|A^\circ\|_\circ\leq \|A\|$, $\,\|\cdot\|_\circ$ is the norm on $B(\HI)$.
The left scalar multiplication of $A^\circ$ by any $\qu\in\quat$ is defined as a map $\qu A^\circ:\mfp\longrightarrow \mfp$ by the setting
\begin{equation}\label{lft_mul-opAcir}
(\qu A^\circ)\{[\phi_n]\}:=\{[\qu (A\phi_n)]\},
\end{equation}
for all $\{[\phi_n]\}\in \mfp$. It is straightforward that $\qu A^\circ$ is a right linear operator.
We also have the following properties for the operators: 
\begin{proposition}
	\label{C1}
	For $A$,$B\in B(\vr)$ and $\qu\in\quat$, we have
	\begin{enumerate}
		\item [(a)] $(A+B)^\circ=\Aci+B^\circ$,
		\item[(b)] $(\qu A)^\circ=\qu A^\circ$,
		\item[(c)] $(AB)^\circ=A^\circ B^\circ$,
		\item[(d)] $(A^\dagger)^\circ=(\Aci)^\dagger$,
		\item[(e)] $\Iop^\circ=\Iop$
		\item[(f)] $\|\Aci\|_\circ=\|A\|$.
	\end{enumerate}
\end{proposition}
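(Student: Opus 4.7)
The plan is to verify each identity first on the dense subspace $\mfp\subseteq\HI$ using the explicit formula $\Aci\{[\phi_n]\}=\{[A\phi_n]\}$ from equation \ref{ex}, and then to extend to all of $\HI$ by continuity, relying on the bound $\|\Aci\|_\circ\leq\|A\|$ already established just before the statement.

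For (a), (b), (c) and (e) the verification is essentially an unwinding of definitions. Applied to a generic coset $u=\{[\phi_n]\}\in\mfp$: one checks $(A+B)^\circ u=\{[A\phi_n]\}+\{[B\phi_n]\}=\Aci u+B^\circ u$, giving (a); using $(\qu A)\phi=\qu(A\phi)$ from \ref{lft_mul-op} together with \ref{lft_mul-opAcir}, $(\qu A)^\circ u=\{[\qu(A\phi_n)]\}=(\qu\Aci)u$, which is (b); composition gives $(AB)^\circ u=\{[A(B\phi_n)]\}=\Aci B^\circ u$, which is (c); and $\Iop^\circ u=u$ yields (e). In each case both sides are bounded right $\quat$-linear operators on $\HI$ that agree on the dense subspace $\mfp$, hence on all of $\HI$.

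For (d), I would compute the inner product directly using \ref{inneq}. For $u=\{[\phi_n]\}$ and $v=\{[\psi_n]\}$ in $\mfp$,
\[
\langle \Aci u\mid v\rangle_{\mfp}=\gli\,\langle A\phi_n\mid \psi_n\rangle_{\vr}=\gli\,\langle\phi_n\mid A^\dagger\psi_n\rangle_{\vr}=\langle u\mid (A^\dagger)^\circ v\rangle_{\mfp}.
\]
Since $\Aci\in B(\HI)$, its adjoint $(\Aci)^\dagger$ exists as a bounded operator on $\HI$ and is the unique such operator satisfying $\langle\Aci u\mid v\rangle_{\mfp}=\langle u\mid (\Aci)^\dagger v\rangle_{\mfp}$ for all $u,v$. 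The displayed identity forces $(A^\dagger)^\circ$ to coincide with $(\Aci)^\dagger$ on the dense subspace $\mfp$, and both being bounded they agree on $\HI$. The step to monitor here is the interaction of $\gli$ with the quaternionic inner product, but right-linearity and left-multiplication by fixed quaternions (properties (b) and (c) of the Banach generalized limit) are exactly what is required.

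Finally, for (f) the inequality $\|\Aci\|_\circ\leq\|A\|$ was recorded immediately before the proposition. For the reverse, the key device is the isometric embedding $\phi\mapsto\phi'=\{\phi\}+\fn$ of $\vr$ into $\mfp$. Given any unit vector $\phi\in\vr$, the constant-sequence coset $\phi'$ satisfies $\|\phi'\|_{\mfp}=1$ and $\Aci\phi'=(A\phi)'$ with $\|(A\phi)'\|_{\mfp}=\|A\phi\|_{\vr}$; taking the supremum over such $\phi$ gives $\|\Aci\|_\circ\geq\|A\|$. The only part with any genuine subtlety is (d), since it is the only place where the positive-linear functional structure of $\gli$, and not merely its behaviour on constant sequences, is really used.
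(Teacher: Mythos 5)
Your proof is correct and follows essentially the same route as the paper's: definition-unwinding on $\mfp$ for (a), (b), (c), (e), the $\gli$-based adjoint computation via equation \ref{inneq} for (d), and the constant-sequence embedding $\phi\mapsto\phi'$ for (f). If anything, your treatment is slightly more careful than the paper's, since you make the density/continuity extension to $\HI$ explicit and, in (f), correctly split the argument into the previously established bound $\|\Aci\|_\circ\leq\|A\|$ plus the lower bound from constant sequences, whereas the paper writes $\|\Aci\|_\circ=\sup_{\|\phi'\|_{\mfp}=1}\|\Aci\phi'\|_{\mfp}$ as an outright equality, which on its face restricts the supremum to constant-sequence cosets and therefore only yields the inequality $\geq$ without the earlier bound.
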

\begin{proof}
Proofs of (a), (c) and (e) are straightforward from the definition of $A^\circ$. Assertion (b) immediately follows from the (definition) equation \ref{lft_mul-opAcir} as follows: for any $\{[\phi_n]\}\in\mfp$,
 $$(\qu A^\circ)\{[\phi_n]\}=\{[\qu (A\phi_n)]\}=\{[(\qu A)\phi_n]\}=(\qu A)^\circ\{[\phi_n]\}.$$ 
 To verify (d), let $C=(\Aci)^\dagger$ and $u=\{[\phi_n]\}$ and $v=\{[\psi_n]\}$. Then $$\langle \Aci u\mid v\rangle_\mfp=\langle u\mid Cv\rangle_{\mfp}.$$ This implies that
 \begin{eqnarray*}
 \langle u\mid Cv\rangle_{\mfp}=\langle \Aci u\mid v\rangle_\mfp=\gli(\langle A\phi_n|\psi_n\rangle_{\vr})
 =\gli(\langle \phi_n|A^\dagger\psi_n\rangle_{\vr})
 =\langle u\mid (A^\dagger)^\circ v\rangle_{\mfp}.
 \end{eqnarray*}
 Therefore $(A^\dagger)^\circ=C=(\Aci)^\dagger$, and this completes the proof of (d). Finally let us establish the equality $\|\Aci\|_\circ=\|A\|$. Firstly note that for any $\phi\in\vr$, from the equation \ref{inneq}, we have $\|\phi'\|_{\mfp}=\|\phi\|_{\vr}$. Now since $\Aci\phi'=(A\phi)'$, for all $\phi\in\vr$,
 $$\|\Aci\|_\circ=\sup_{\|\phi' \|_{\mfp}=1}\|\Aci\phi'\|_{\mfp}=\sup_{\|\phi \|_{\vr}=1}\|(A\phi)'\|_{\mfp}=\sup_{\|\phi \|_{\vr}=1}\|A\phi\|_{\vr}=\|A\|.$$
 Therefore the assertion (f) follows. 
\end{proof}
The continuous right linear mapping $\Aci$ extends to a unique right linear operator in $\HI$, which we also denote $\Aci$. Also in the theorem \ref{ET0}, $B(\vr)$ with left multiplication is a $C^*$-algebra with unity $\Iop$. In the same manner, $B(\HI)$ with left multiplication is a $C^*$-algebra with the same unity $\Iop$. Also note that, no matter which Hilbert basis we choose to define a left multiplication the spaces $B(\vr)$ and $B(\HI)$ becomes $C^*$-algebras, and hence the results provided in this note are independent of the basis chosen.
\begin{theorem}
The mapping
$$B(\vr)\longrightarrow B(\HI)\quad\text{by}\quad A\mapsto\Aci$$
is a faithful $*$-representation. 
\end{theorem}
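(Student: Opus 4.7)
The statement says that $A\mapsto A^\circ$ is a faithful $*$-representation of $B(\vr)$ into $B(\HI)$. My plan is to read off every required property directly from Proposition \ref{C1}, after noting that the extension of $A^\circ$ from the dense subspace $\mfp$ to its completion $\HI$ is unique and continuous, so algebraic identities established on $\mfp$ transfer to $\HI$.

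First, I would verify that $A\mapsto A^\circ$ is a unital quaternionic $*$-algebra homomorphism between the two-sided Banach $C^*$-algebras $B(\vr)$ and $B(\HI)$ (both furnished with the $C^*$-structure of Theorem \ref{ET0}). Additivity and compatibility with the left scalar multiplication are parts (a) and (b) of Proposition \ref{C1}; multiplicativity is (c); unitality is (e); preservation of the involution is (d). The only piece of structure not directly enumerated in Proposition \ref{C1} is compatibility with the right scalar multiplication of operators, but this is automatic from the previous items: from the definitions \eqref{lft_mul-op} and \eqref{rgt_mul-op} one has $A\qu = A\cdot(\qu\Iop)$ in $B(\vr)$, so (b), (c) and (e) together give
\[
(A\qu)^\circ \;=\; A^\circ\cdot(\qu\Iop)^\circ \;=\; A^\circ\cdot\qu\,\Iop^\circ \;=\; A^\circ\,\qu.
\]

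Second, for faithfulness, I invoke part (f) of Proposition \ref{C1}: $\|A^\circ\|_\circ = \|A\|$. Thus the map is isometric and in particular injective, so its kernel is $\{0\}$, which is exactly what "faithful" means for a representation of a $C^*$-algebra.

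There is no real obstacle here: the theorem is essentially a repackaging of Proposition \ref{C1}. The only item worth making explicit in the write-up is the reduction of right-scalar compatibility to multiplicativity, unitality, and left-scalar linearity, as above; everything else is a one-line citation of the corresponding part of Proposition \ref{C1}.
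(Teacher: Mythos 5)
Your proposal is correct, and its overall strategy coincides with the paper's: both proofs are essentially a repackaging of Proposition \ref{C1}. The one genuine difference is the faithfulness step. You deduce injectivity from the isometry of part (f) together with additivity, so that $\Aci=B^\circ$ forces $\|A-B\|=\|(A-B)^\circ\|_\circ=0$; the paper instead argues from scratch, evaluating the hypothesis $\Aci=B^\circ$ on constant sequences $\{\phi\}$ to conclude $\|(A-B)\phi\|_{\vr}=0$ for every $\phi\in\vr$. These are the same fact in different clothing (part (f) was itself proved in the paper by evaluating on constant sequences), but your version is more modular and avoids repeating a computation. You are also more scrupulous than the paper about the axioms: the paper's proof cites only part (c) for the homomorphism property, leaving (a), (b), (d), (e) implicit, and it never addresses compatibility with the right scalar multiplication, which you settle cleanly via $A\qu=A(\qu\Iop)$ and parts (b), (c), (e). So your write-up is, if anything, a more complete justification of the statement than the one in the paper.
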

\begin{proof}
The assertion (c) of proposition \ref{C1} verifies that the above map is a homomorphism. To check the injectivity of this map, suppose that $A,B\in B(\vr)$ with $A^\circ=B^\circ$. Then for any $\{[\phi_n]\}\in\mfp$, we have 
\begin{eqnarray*}
\{[A\phi_n]\}=\{[B\phi_n]\}\Rightarrow \{(A-B)\phi_n\}\in\mathfrak{N}
\,\Rightarrow\, \gli(\langle(A-B)\phi_n\mid(A-B)\phi_n\rangle_{V_{\quat}^{R}})=0. 
\end{eqnarray*}
Let $\phi\in\vr$, and choose $\phi_n=\phi,~\forall\,n\in\mathbb{N}$. Then $\|(A-B)\phi\|_{\vr}=0$. This concludes that $A=B$. Therefore the above map is injective. Hence the theorem follows. 
\end{proof}	
Suppose $A\geq 0$, that is $\langle A\phi|\phi\rangle_{\vr}\geq 0$ for all $\phi\in\vr$. If $u=\{\phi_n\}'\in\fp$, then $\langle A\phi_n|\phi_n\rangle_{\vr}\geq 0$ for all $n$, hence
$$\langle \Aci u|u\rangle_{\mfp}=\gli\langle A\phi_n|\phi_n\rangle_{\vr}\geq 0.$$ Hence $\langle \Aci v|v\rangle_{\mfp}\geq 0$ for all $v\in\HI$. Thus clearly for an operator $A$ in $\vr$ we have
\begin{equation}\label{EC1}
A\geq 0\Leftrightarrow \Aci\geq 0.
\end{equation}
\begin{proposition}\label{PC1}
		If $A\in B(\vr)$, then $\apo(\Aci)=\apo(A)$.
	\end{proposition}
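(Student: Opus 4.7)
My plan is to prove the equality by establishing both inclusions, using the characterization of the approximate S-point spectrum provided by Proposition \ref{P4} (the bounded-below characterization) together with the algebraic properties of the Berberian map collected in Proposition \ref{C1}.

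The first preparatory observation I would record is that $R_{\mathfrak{q}}(A^{\circ}) = R_{\mathfrak{q}}(A)^{\circ}$. This is immediate from Proposition \ref{C1}: parts (a), (c) and (e) give $(A^{2})^{\circ} = (A^{\circ})^{2}$, $(\mathbb{I})^{\circ} = \mathbb{I}$, and additivity, while part (b) for the real scalar $-2\,\mathrm{Re}(\mathfrak{q})$ and the scalar $|\mathfrak{q}|^{2}$ takes care of the remaining terms. With this identity in hand both directions become short.

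For the inclusion $\sigma_{ap}^{S}(A) \subseteq \sigma_{ap}^{S}(A^{\circ})$, let $\mathfrak{q} \in \sigma_{ap}^{S}(A)$ and choose a sequence $\{\phi_{n}\} \subset V_{\mathbb{H}}^{R}$ with $\|\phi_{n}\|_{V_{\mathbb{H}}^{R}} = 1$ and $\|R_{\mathfrak{q}}(A)\phi_{n}\|_{V_{\mathbb{H}}^{R}} \to 0$. Using the isometric embedding $\phi \mapsto \phi' = \{\phi\} + \mathfrak{N}$ of $V_{\mathbb{H}}^{R}$ into $\mathfrak{H}$, each $\phi_{n}'$ is a unit vector in $\mathfrak{H}$. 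The defining relation $A^{\circ}\phi' = (A\phi)'$ together with the identity from the previous paragraph yields $R_{\mathfrak{q}}(A^{\circ})\phi_{n}' = (R_{\mathfrak{q}}(A)\phi_{n})'$, and isometry gives $\|R_{\mathfrak{q}}(A^{\circ})\phi_{n}'\|_{\mathfrak{P}} = \|R_{\mathfrak{q}}(A)\phi_{n}\|_{V_{\mathbb{H}}^{R}} \to 0$. Hence $\mathfrak{q} \in \sigma_{ap}^{S}(A^{\circ})$.

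For the reverse inclusion I would argue by contrapositive. Assume $\mathfrak{q} \notin \sigma_{ap}^{S}(A)$; by Proposition \ref{P4} there is a constant $c>0$ with $\|R_{\mathfrak{q}}(A)\phi\|_{V_{\mathbb{H}}^{R}} \geq c\,\|\phi\|_{V_{\mathbb{H}}^{R}}$ for every $\phi \in V_{\mathbb{H}}^{R}$. Given any $u = \{[\phi_{n}]\} \in \mathfrak{P}$, the bounded real sequence $\{\|R_{\mathfrak{q}}(A)\phi_{n}\|^{2} - c^{2}\|\phi_{n}\|^{2}\}$ is termwise non-negative, so property (e) of $\mathrm{glim}$ and the definition (\ref{inneq}) of the inner product on $\mathfrak{P}$ give
\begin{equation*}
\|R_{\mathfrak{q}}(A^{\circ})u\|_{\mathfrak{P}}^{2} = \mathrm{glim}\,\|R_{\mathfrak{q}}(A)\phi_{n}\|_{V_{\mathbb{H}}^{R}}^{2} \geq c^{2}\,\mathrm{glim}\,\|\phi_{n}\|_{V_{\mathbb{H}}^{R}}^{2} = c^{2}\|u\|_{\mathfrak{P}}^{2}.
\end{equation*}
Because $\mathfrak{P}$ is dense in $\mathfrak{H}$ and $R_{\mathfrak{q}}(A^{\circ})$ is continuous, the inequality $\|R_{\mathfrak{q}}(A^{\circ})u\|_{\circ} \geq c\|u\|_{\circ}$ extends to all $u \in \mathfrak{H}$, and another application of Proposition \ref{P4} shows $\mathfrak{q} \notin \sigma_{ap}^{S}(A^{\circ})$.

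The main (minor) obstacle I anticipate is being careful that the formal identity $R_{\mathfrak{q}}(A)^{\circ} = R_{\mathfrak{q}}(A^{\circ})$ genuinely requires only the real-scalar case of Proposition \ref{C1}(b); the non-commutativity of $\mathbb{H}$ is harmless here because $-2\,\mathrm{Re}(\mathfrak{q})$ and $|\mathfrak{q}|^{2}$ are real. Everything else is a clean translation of Berberian's classical complex argument.
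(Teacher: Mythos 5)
Your proof is correct, but it takes a different route from the paper's, which is worth comparing. The paper proves the equality in one stroke: it characterizes $\qu\notin\apo(A)$ by the operator inequality $R_\qu(A^\dagger)R_\qu(A)\geq\epsilon\Iop$ for some $\epsilon>0$ (this is Proposition \ref{P4} rewritten, since $R_\qu(A)^\dagger=R_\qu(A^\dagger)$ and $\|R_\qu(A)\phi\|_{\vr}^2=\langle R_\qu(A^\dagger)R_\qu(A)\phi\mid\phi\rangle_{\vr}$), and then invokes the equivalence (\ref{EC1}), $B\geq 0\Leftrightarrow B^\circ\geq 0$, applied to $B=R_\qu(A^\dagger)R_\qu(A)-\epsilon\Iop$ together with the homomorphism properties of Proposition \ref{C1}, to transfer the inequality between $A$ and $\Aci$ in both directions simultaneously. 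You never use (\ref{EC1}); instead you split the statement into two inclusions: $\apo(A)\subseteq\apo(\Aci)$ via the isometric embedding of constant sequences, and the reverse by contrapositive, transporting the lower bound of Proposition \ref{P4} through positivity property (e) of $\gli$ and then extending from the dense subspace $\mathfrak{P}$ to $\HI$ by continuity. In substance, your reverse inclusion unfolds exactly what the paper's appeal to (\ref{EC1}) packages (the proof of (\ref{EC1}) is the same glim-positivity computation), and your forward inclusion unfolds the other direction (restriction to constant sequences). What your version buys is self-containedness: it makes explicit both the identity $R_\qu(\Aci)=R_\qu(A)^\circ$ and the density/completion step, neither of which the paper spells out, and it does not rely on the unproven equivalence between $\qu\notin\apo(A)$ and the operator inequality. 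What it costs is length. One notational slip to fix: in your final extension step the vector norm on $\HI$ should not carry the subscript $\circ$ --- in the paper $\|\cdot\|_\circ$ denotes the operator norm on $B(\HI)$, not a norm on vectors.
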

	\begin{proof} Let $\qu\in\quat$. Then,
		$\qu\not\in\apo(A)$ if and only if there exists $\epsilon>0$ such that $R_\qu(A^\dagger)R_\qu(A)\geq\epsilon\Iop$. By equation \ref{EC1}, this condition is equivalent to  $R_\qu((\Aci)^\dagger)R_\qu(\Aci)\geq\epsilon\Iop$ thus $\qu\notin\apo(\Aci).$
	\end{proof}
	The following theorem is the key result of Berberian extension. 
	\begin{theorem}\label{TC1}
		For every operator $A\in B(\vr)$, we have $\apo(A)=\apo(\Aci)=\sigma_{pS}(\Aci).$
	\end{theorem}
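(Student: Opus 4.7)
The equality $\apo(A) = \apo(\Aci)$ is already supplied by Proposition \ref{PC1}, and Proposition \ref{P3} gives the inclusion $\sigma_{pS}(\Aci) \subseteq \apo(\Aci)$. So the whole chain reduces to showing one remaining inclusion, namely $\apo(A) \subseteq \sigma_{pS}(\Aci)$; combined with the two facts above this forces all three sets to coincide. My plan is to take an arbitrary $\qu \in \apo(A)$ and manufacture an explicit nonzero vector in $\ker(R_\qu(\Aci))$ out of an approximate eigensequence for $A$.

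By Definition \ref{D1}, pick $\{\phi_n\}\subseteq \vr$ with $\|\phi_n\|_{\vr}=1$ for every $n$ and $\|R_\qu(A)\phi_n\|_{\vr}\to 0$. Set $s=\{\phi_n\}$ and $u=s'=\{[\phi_n]\}\in \mfp \subseteq \HI$. First I would verify $\|u\|_{\mfp}=1$: by \eqref{inneq},
\[
\|u\|_{\mfp}^{2}=\gli(\langle \phi_n\mid \phi_n\rangle_{\vr})=\gli(\|\phi_n\|_{\vr}^{2})=\gli(1)=1,
\]
using property (d) of $\gli$ on the constant sequence $\{1\}$. In particular $u\neq 0$.

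Next I would identify $(R_\qu(A))^{\circ}$ with $R_\qu(\Aci)$. Proposition \ref{C1}(a)(b)(c)(e) gives $(A^{2})^{\circ}=(\Aci)^{2}$, $(2\,\re(\qu)A)^{\circ}=2\,\re(\qu)\Aci$ (since $2\,\re(\qu)\in\R$), and $(|\qu|^{2}\Iop)^{\circ}=|\qu|^{2}\Iop$, so additivity yields
\[
(R_\qu(A))^{\circ}=(\Aci)^{2}-2\,\re(\qu)\Aci+|\qu|^{2}\Iop=R_\qu(\Aci).
\]
Applying the definition \eqref{ex} of the Berberian extension to this identity, $R_\qu(\Aci)u=\{[R_\qu(A)\phi_n]\}$. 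Because $\|R_\qu(A)\phi_n\|_{\vr}\to 0$, property (d) of $\gli$ gives $\gli(\|R_\qu(A)\phi_n\|_{\vr}^{2})=0$, so $\{R_\qu(A)\phi_n\}\in \mathfrak{N}$ and hence $R_\qu(\Aci)u=0$ in $\mfp$. Thus $u$ is a nonzero element of $\ker(R_\qu(\Aci))$, which by Proposition \ref{Jo} means $\qu\in\sigma_{pS}(\Aci)$.

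This proves $\apo(A)\subseteq\sigma_{pS}(\Aci)\subseteq\apo(\Aci)=\apo(A)$, and the three sets coincide. The main conceptual point, and the only place where some care is needed, is the operator identity $(R_\qu(A))^{\circ}=R_\qu(\Aci)$: it uses in an essential way that the coefficients $2\,\re(\qu)$ and $|\qu|^{2}$ appearing in the pseudo-resolvent are real, so that the left-scalar-multiplication compatibility in Proposition \ref{C1}(b) can be invoked without worrying about noncommutativity between quaternions and the extension map. Everything else is a routine translation between $\gli$-limits and norm-limits in $\vr$.
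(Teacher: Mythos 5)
Your proof is correct and follows essentially the same route as the paper's: the easy inclusions come from Propositions \ref{PC1} and \ref{P3}, and for $\qu\in\apo(A)$ one promotes a normalized approximate eigensequence $\{\phi_n\}$ to the unit vector $u=\{\phi_n\}'$ in the extension space, shows $R_\qu(\Aci)u=0$, and concludes via Proposition \ref{Jo}. Your explicit verification of the identity $(R_\qu(A))^{\circ}=R_\qu(\Aci)$ through Proposition \ref{C1}, and of $\|u\|_{\mfp}=1$ via the generalized limit, are details the paper leaves implicit, but the argument is the same.
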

	\begin{proof}
		From propositions \ref{P3}, \ref{PC1} the relation $\apo(A)=\apo(\Aci)\supseteq\sigma_{pS}(\Aci)$ is clear. Let $\qu\in\apo(A)$. Then there exists a sequence $\{\phi_n\}\subseteq\vr$ with $\|\phi_n\|_{\vr}=1$ such that $\|R_\qu(A)\phi_n\|_{\vr}\rightarrow 0$. Set $u=\{\phi_n\}'$, clearly $\|u\|_{\fp}=1$. Also
		$$\|R_\qu(\Aci)u\|_{\fp}=\gli\|R_\qu(A)\phi_n\|_{\vr}\rightarrow 0.$$
		Therefore, by proposition \ref{Jo}, $\qu$ is a right eigenvalue of $\Aci$. Hence $\qu\in\sigma_{pS}(\Aci)$, which completes the proof.
	\end{proof}
\section{Application to commutators in the quaternionic setting}
In the complex setting, the Berberian extension is very useful in studying spectral properties of commutators \cite{La}. Following the complex formalism, in this section, we shall study some properties of S-spectrum of commutators in the quaternionic setting.
\begin{proposition}\label{COP1}
	Let $A, B\in B(\vr)$ such that $AB=BA$, then
	\begin{enumerate}
		\item [(a)] $\apo(A+B)\subseteq\apo(A)+\apo(B)$,
		\item[(b)] $\sus(A+B)\subseteq\sus(A)+\sus(B)$,
		\item[(c)] $\sigma_S(A+B)\subseteq\sigma_S(A)+\sigma_S(B)$.
	\end{enumerate}
\end{proposition}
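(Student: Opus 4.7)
The plan is to prove (a) via the Berberian extension combined with a subspace argument adapted to the quaternionic setting, and then to derive (b) and (c) from (a) by duality and by the decomposition $\sigma_S=\sigma_{pS}\cup\sus$. For (a), given $\qu\in\apo(A+B)$, Theorem~\ref{TC1} together with Proposition~\ref{C1}(a) reduces the claim to $\qu\in\sigma_{pS}(A^\circ+B^\circ)$, whence Proposition~\ref{Jo} furnishes $u\in\HI\setminus\{0\}$ with $(A^\circ+B^\circ)u=u\qu$. The delicate point is that the genuine right eigenspace $E:=\{v\in\HI:(A^\circ+B^\circ)v=v\qu\}$ is not a right $\quat$-submodule in general, because right multiplication by $\qu$ fails to commute with arbitrary quaternionic scalars; however, on setting $\C_\qu:=\R\oplus\R\hat v$ with $\hat v$ any unit imaginary quaternion such that $\qu\in\C_\qu$ (take $\hat v=(\qu-\text{Re}(\qu))/|\qu-\text{Re}(\qu)|$ when $\qu\notin\R$, otherwise any unit imaginary), $E$ becomes a nonzero closed right $\C_\qu$-submodule of $\HI$, and it is invariant under both $A^\circ$ and $B^\circ$ since both commute with $A^\circ+B^\circ$ by Proposition~\ref{C1}(c).

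Viewing $E$ as a complex Banach space via the isomorphism $\C_\qu\cong\C$ (the quaternionic norm restricts to a $\C$-homogeneous norm, using $\|vc\|=|c|\,\|v\|$ for $c\in\quat$), the operator $A^\circ|_E$ is bounded and $\C$-linear, hence has nonempty compact complex spectrum. Pick $\alpha$ on the boundary of that spectrum; by the classical fact that $\partial\sigma_\C(T)\subseteq\sigma_{ap,\C}(T)$, there is a sequence $\{u_n\}\subseteq E$ with $\|u_n\|=1$ and $\epsilon_n:=A^\circ u_n-u_n\alpha\to0$ in $\HI$. Using the identity $\alpha^2-2\text{Re}(\alpha)\alpha+|\alpha|^2=0$, a direct expansion gives $R_\alpha(A^\circ)u_n=-\epsilon_n\overline{\alpha}+A^\circ\epsilon_n\to0$, so $\alpha\in\apo(A^\circ)=\apo(A)$ by Proposition~\ref{PC1}. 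Since $u_n\in E$, one has $B^\circ u_n=(A^\circ+B^\circ)u_n-A^\circ u_n=u_n(\qu-\alpha)-\epsilon_n$, and the analogous expansion yields $R_{\qu-\alpha}(B^\circ)u_n=\epsilon_n\overline{(\qu-\alpha)}-B^\circ\epsilon_n\to0$, so $\qu-\alpha\in\apo(B)$. Therefore $\qu=\alpha+(\qu-\alpha)\in\apo(A)+\apo(B)$, which establishes (a).

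For (b), note that $A^\dagger$ and $B^\dagger$ commute, since $A^\dagger B^\dagger=(BA)^\dagger=(AB)^\dagger=B^\dagger A^\dagger$ by Proposition~\ref{da}(b) and the commutativity of $A,B$. Applying (a) to the pair $(A^\dagger,B^\dagger)$, together with $(A+B)^\dagger=A^\dagger+B^\dagger$ (Proposition~\ref{da}(a)) and $\sus(T)=\apo(T^\dagger)$ (Proposition~\ref{su1}(b)), yields $\sus(A+B)\subseteq\sus(A)+\sus(B)$. For (c), combine the decomposition~\eqref{sue1}, the inclusion $\sigma_{pS}\subseteq\apo$ (Proposition~\ref{P3}), and $\apo,\sus\subseteq\sigma_S$ with (a) and (b) to obtain $\sigma_S(A+B)\subseteq(\apo(A)+\apo(B))\cup(\sus(A)+\sus(B))\subseteq\sigma_S(A)+\sigma_S(B)$. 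The main obstacle is the correct construction of the subspace $E$: the naive right $\quat$-submodule $\ker R_\qu(A^\circ+B^\circ)$ only records $R_\qu(C)v=0$ and does not let $C$ act as a scalar, whereas on the strict right eigenspace $E$ one has $Cv=v\qu$ for every $v$, at the cost of losing $\quat$-linearity and retaining only $\C_\qu$-linearity. Verifying that $A^\circ|_E$ is a bounded $\C$-linear operator on the complex Banach space $E$, so that classical complex spectral theory (nonemptiness of the complex spectrum and inclusion of its boundary in the complex approximate point spectrum) applies, is the crucial technical step of the argument.
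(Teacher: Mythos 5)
Your proof is correct, and your derivations of (b) and (c) coincide exactly with the paper's (adjoints via Proposition \ref{da} and Proposition \ref{su1}(b) for (b); the decomposition in equation \ref{sue1} together with $\sigma_{pS}\subseteq\apo$ and $\apo,\sus\subseteq\sigma_S$ for (c)). Where you genuinely diverge is the key step in (a). The paper, like you, passes to $\sigma_{pS}(\Aci+\Bc)$ via Theorem \ref{TC1}, but it then works with the right $\quat$-submodule $Z=\kr(R_\qu(\Aci+\Bc))$, restricts $\Aci$ to $Z$, asserts without justification that $\apo(\Aci|Z)\not=\emptyset$ and that $\apo(\Aci|Z)=\sigma_{pS}(\Aci|Z)$, and then writes the operator identities ``$\Aci-\pu\Ihi=0$'' and ``$\Aci+\Bc-\qu\Ihi=0$ on $Z$''. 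These steps are problematic: Theorem \ref{TC1} applies to Berberian extensions, not to their restrictions to invariant subspaces, and on $\kr(R_\qu(\Aci+\Bc))$ the operator $\Aci+\Bc$ only satisfies the quadratic identity $R_\qu(\Aci+\Bc)=0$; it does not act as right multiplication by $\qu$ (indeed, for non-real $\qu$ it cannot do so on any nonzero right $\quat$-submodule, which is exactly the obstruction you point out). Your replacement, namely the strict right eigenspace $E$ furnished by Proposition \ref{Jo}, which is only a closed right $\C_\qu$-submodule, viewed as a complex Banach space on which $\Aci|_E$ is bounded and $\C$-linear, followed by the classical facts that the complex spectrum is nonempty and its boundary lies in the complex approximate point spectrum, and finally the explicit expansions $R_\alpha(\Aci)u_n=-\epsilon_n\overline{\alpha}+\Aci\epsilon_n$ and $R_{\qu-\alpha}(\Bc)u_n=\epsilon_n\overline{(\qu-\alpha)}-\Bc\epsilon_n$ (both of which I checked; they rely on the identity $\beta^2-2\re(\beta)\beta+|\beta|^2=0$ valid for every quaternion $\beta$), supplies precisely the justification that the paper's argument lacks. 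What the paper's route buys is brevity inside the purely quaternionic formalism; what yours buys is rigor: every step is covered either by the paper's own results (Propositions \ref{C1}, \ref{Jo}, \ref{PC1} and Theorem \ref{TC1}) or by standard complex Banach space spectral theory, at the modest cost of tracking the slice $\C_\qu$ and verifying $\C$-homogeneity of the norm on $E$. In effect, your argument is a corrected implementation of the paper's strategy rather than a transcription of it.
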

\begin{proof}
	(a)~Since $AB=BA$ we have $\Aci\Bc=\Bc\Aci$. Let $\qu\in\apo(A+B)=\sigma_{pS}(\Aci+\Bc)$. Let $Z=\kr(R_\qu(\Aci+\Bc))$. then $Z\not=\emptyset$. Let $\psi\in \Aci Z$, then $\psi=\Aci\phi$ for some $\phi\in Z$ and also $R_\qu(\Aci+\Bc)\phi=0$. Now
	$$R_\qu(\Aci+\Bc)\psi=R_\qu(\Aci+\Bc)\Aci\phi=\Aci R_\qu(\Aci+\Bc)\phi=0.$$
	Therefore $\psi\in Z$, hence $\Aci Z\subseteq Z$. That is, $Z$ is invariant under $\Aci$, and therefore $\apo(\Aci|Z)\not=\emptyset$. Let $\pu\in\apo(\Aci|Z)=\sigma_{pS}(\Aci|Z)$, hence $\Aci-\pu\Ihi=0$. Since $\qu\in\sigma_{pS}(\Aci+\Bc)$, we have $\Aci+\Bc-\qu\Ihi=0$, that is $\Bc=\qu\Ihi-\Aci$. Therefore,
	$$\Bc-(\qu-\pu)\Ihi=-(\Aci-\pu)\Ihi=0~~\text{on}~~Z.$$
	Thus $\qu-\pu\in \sigma_{pS}(\Bc|Z).$ Hence, from proposition \ref{PC1},
	$$\qu=\pu+(\qu-\pu)\in\sigma_{pS}(\Aci)+\sigma_{pS}(\Bc)
	=\apo(\Aci)+\apo(\Bc)=\apo(A)+\apo(B).$$ This completes the proof of (a).\\
	(b)~ Since $AB=BA$, we have $A^\dagger B^\dagger=B^\dagger A^\dagger$, and therefore (a) holds for $A^\dagger, B^\dagger$. Further from proposition \ref{su1}, part (b), $\sus(A)=\apo(A^\dagger)$. Thus (b) follows.\\
	(c)~ For any $A\in B(\vr)$, from equation \ref{sue1}, proposition \ref{P3}, we have $\sigma_S(A)=\sigma_{pS}(A)\cup\sus(A)\subseteq\apo(A)\cup\sus(A)$. And clearly $\apo(A),\sus(A)\subseteq\sigma_S(A)$. Therefore, from (a) and (b), we have
	$$\apo(A+B)\subseteq\apo(A)+\apo(B)\subseteq\sigma_S(A)+\sigma_S(B)$$
		and
	$$\sus(A+B)\subseteq\sus(A)+\sus(B)\subseteq\sigma_S(A)+\sigma_S(B).$$
Thus
	$$\sigma_S(A+B)\subseteq\apo(A+B)\cup\sus(A+B)\subseteq\sigma_S(A)+\sigma_S(B).$$
	Hence the inclusion (c) holds.
\end{proof}
\begin{definition}\label{CD1}
	Given $S, T\in B(\vr)$, the commutator $C(S,T):B(\vr)\longrightarrow B(\vr)$ is the mapping
	$$C(S,T)(A)=SA-AT=\bL_S(A)-\bR_T(A),\quad\text{for all}~~A\in B(\vr),$$
	where $\bL_S(A)=SA$ and $\bR_T(A)=AT$. It is clear that $A\in B(\vr)$ intertwines the pair $(S,T)$ precisely when $C(S,T)=0$.
\end{definition}
\begin{remark}
It is worth  noting the following results: for any $\qu\in\quat$ and $S,T\in B(\vr)$,
\begin{enumerate}
	\item $\bL_S\bR_T=\bR_T\bL_S$,
	\item $R_\qu(\bL_S)=R_\qu(S)$,
	\item $R_\qu(\bR_T)A=AR_\qu(T)$; for all $A\in B(\vr)$,
	\item $\bL_{R_{\qu}(S)}=R_\qu(\bL_S)$,
	\item $\bR_{R_{\qu}(T)}=R_\qu(\bR_T)$.
\end{enumerate}
The verifications of these results are elementary.
\end{remark}
The next proposition gathers some useful identities to prove the S-spectral properties of commutators which are provided in Theorem \ref{CT1}.
\begin{proposition}\label{p1} For arbitrary operators $S,T\in B(\vr)$ the following assertions hold true:
\begin{enumerate}
	\item[(a)] $\apo(\bL_S)=\apo(S)$,
	\item[(b)] $\apo(\bR_T)=\sus(T)$,
	\item[(c)] $\sus(\bL_S)=\sus(S)$,
	\item[(d)] $\sus(\bR_T)=\apo(T)$.
\end{enumerate}
\end{proposition}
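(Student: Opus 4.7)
The plan is to exploit the identities from the preceding remark, namely $R_\qu(\bL_S)A = R_\qu(S)A$ and $R_\qu(\bR_T)A = AR_\qu(T)$ for every $A\in B(\vr)$, which reduce each S-spectral question about $\bL_S,\bR_T$ on $B(\vr)$ to a question about left- or right-composition with $R_\qu(S),R_\qu(T)$ on $\vr$. The workhorse will be the rank-one operators $\phi\otimes\varphi : \psi\mapsto \phi\,\langle\varphi\mid\psi\rangle_{\vr}$, which are right linear and bounded with $\|\phi\otimes\varphi\| = \|\phi\|\,\|\varphi\|$ and adjoint $(\phi\otimes\varphi)^\dagger = \varphi\otimes\phi$. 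I will also use proposition \ref{su1} in the form $\sus(T) = \apo(T^\dagger)$, the identity $R_\qu(T)^\dagger = R_\qu(T^\dagger)$ (from proposition \ref{da}), and the $C^*$-identity $\|A^\dagger\| = \|A\|$ supplied by theorem \ref{ET0}.

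For (a), given $\qu\in\apo(S)$ pick unit vectors $\phi_n\in\vr$ with $R_\qu(S)\phi_n\to 0$, fix a unit vector $\varphi_0\in\vr$, and set $A_n := \phi_n\otimes\varphi_0$; then $\|A_n\|=1$ and $\|R_\qu(\bL_S)A_n\| = \|R_\qu(S)A_n\| \le \|R_\qu(S)\phi_n\|\to 0$. Conversely, if $\|A_n\|=1$ and $\|R_\qu(S)A_n\|\to 0$, choose $\phi_n\in\vr$ with $\|\phi_n\|=1$ and $\|A_n\phi_n\|\to 1$; then $\psi_n := A_n\phi_n/\|A_n\phi_n\|$ are unit vectors with $\|R_\qu(S)\psi_n\|\to 0$. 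For (b), given $\qu\in\sus(T)=\apo(T^\dagger)$, take unit $\phi_n$ with $R_\qu(T^\dagger)\phi_n\to 0$ and set $A_n := \varphi_0\otimes\phi_n$; the identity $A_n R_\qu(T)\psi = \varphi_0\,\langle R_\qu(T^\dagger)\phi_n\mid\psi\rangle_{\vr}$ yields $\|R_\qu(\bR_T)A_n\|\le\|R_\qu(T^\dagger)\phi_n\|\to 0$. The converse follows by taking adjoints: from $\|A_nR_\qu(T)\|\to 0$ and $\|A_n\|=1$ one deduces $\|R_\qu(T^\dagger)A_n^\dagger\|\to 0$ and $\|A_n^\dagger\|=1$, and applying the converse of (a) to $T^\dagger$ gives $\qu\in\apo(T^\dagger)=\sus(T)$.

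For (c), if $\qu\in\sus(S)$ pick $\phi\in\vr\setminus\ra(R_\qu(S))$ and set $B := \phi\otimes\varphi_0$; any $A$ with $R_\qu(S)A = B$ would give $R_\qu(S)(A\varphi_0) = B\varphi_0 = \phi$, a contradiction, so $B\notin\ra(R_\qu(\bL_S))$. Conversely, if $R_\qu(S)$ is surjective, decompose $\vr = \ker(R_\qu(S))\oplus\ker(R_\qu(S))^\perp$ (using proposition \ref{IP30}(c)) and apply theorem \ref{NT1} to the restriction of $R_\qu(S)$ to $\ker(R_\qu(S))^\perp$ to obtain a bounded right inverse $C$; then $A := CB$ solves $R_\qu(S)A = B$ for every $B\in B(\vr)$. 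For (d), if $\qu\in\apo(T)$ take unit $\phi_n$ with $R_\qu(T)\phi_n\to 0$; any $A$ with $AR_\qu(T)=\Iop$ would force $\phi_n = AR_\qu(T)\phi_n\to 0$, contradicting $\|\phi_n\|=1$, so $\Iop\notin\ra(R_\qu(\bR_T))$. Conversely, if $\qu\notin\apo(T)$ then by proposition \ref{P4} the operator $R_\qu(T)$ is injective with closed range, hence invertible on its range with bounded inverse; letting $P$ denote the orthogonal projection of $\vr$ onto $\ra(R_\qu(T))$, the operator $A := BR_\qu(T)^{-1}P$ lies in $B(\vr)$ and satisfies $AR_\qu(T) = B$ for every $B$.

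The only genuinely delicate point is the converse of (c), which requires extracting a bounded right inverse from the surjectivity of $R_\qu(S)$; this uses the bounded inverse theorem \ref{NT1} together with the orthogonal splitting of $\vr$ along the closed kernel, both of which are available in the quaternionic setting. Everything else is careful bookkeeping between the left and right quaternionic scalar actions involved in the rank-one constructions, together with $(AB)^\dagger=B^\dagger A^\dagger$ and $\|A^\dagger\|=\|A\|$ from the $C^*$-algebra structure of $B(\vr)$.
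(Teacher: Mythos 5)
Your proof is correct, and it shares the paper's skeleton: reduce every question about $\bL_S$ and $\bR_T$ to left/right composition against $R_\qu(S)$ and $R_\qu(T)$, and use rank-one operators as test elements (the paper builds them from dual functionals $\Phi\in(\vr)^*$, you from $\phi\otimes\varphi=\phi\langle\varphi\mid\cdot\rangle_{\vr}$ --- the same device). The genuine divergences are these. In (b) the paper argues directly with the bounded-below criterion of proposition \ref{P4}, pushing a constant through functional-built operators and then through $\ra(R_\qu(T))^\perp=\kr(R_\qu(T)^\dagger)$; you instead reduce (b) to (a) by adjoint duality --- $\sus(T)=\apo(T^\dagger)$ from proposition \ref{su1}, $R_\qu(T)^\dagger=R_\qu(T^\dagger)$ from proposition \ref{da}, and $\|X^\dagger\|=\|X\|$ from theorem \ref{ET0} --- which is shorter and reuses results already in the paper, at the price of being Hilbert-specific where the paper's dual-functional computation is in principle Banach-friendly. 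In (c) the paper merely asserts the equivalence ``$R_\qu(\bL_S)$ surjective if and only if $R_\qu(S)$ surjective''; you actually prove both directions, though you could have invoked proposition \ref{PS} (surjective $\Leftrightarrow$ right invertible) to produce the bounded right inverse in one step rather than applying theorem \ref{NT1} to the restriction of $R_\qu(S)$ to $\kr(R_\qu(S))^\perp$, which is an operator from a closed subspace into $\vr$ and so, strictly speaking, lies outside the statement you cite (the extension is standard, but citing proposition \ref{PS} avoids the issue). In (d) the paper proves $\sus(\bR_T)\subseteq\apo(T)$ via left invertibility (propositions \ref{PI} and \ref{PS}), which your explicit left inverse $R_\qu(T)^{-1}P$ reproduces constructively; but for the reverse inclusion the paper's argument (``$\|R_\qu(T)\|\geq\|A\|/\|B\|$, hence $R_\qu(T)$ is bounded below'') is a non sequitur, whereas your observation that solvability of $AR_\qu(T)=\Iop$ would force the unit vectors $\phi_n$ to collapse is the correct repair. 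Likewise in (a): the paper normalizes with a single fixed $\phi$, setting $\psi_n=\phi/\|A_n\phi\|_{\vr}$, which breaks down when $\|A_n\phi\|\to 0$ or vanishes; your per-$n$ choice of vectors $\phi_n$ with $\|A_n\phi_n\|\to 1$ avoids this. In short: same strategy, but your adjoint-based (b) and constructive (c), (d) are tighter than --- and in two places correct genuine slips in --- the published proof.
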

\begin{proof}
~~(a) To prove (a), let $\qu\in\apo(\bL_S)$, then there exists a sequence $\{A_n\}\subseteq B(\vr)$ with $\|A_n\|=1$ such that $\|R_\qu(\bL_S)A_n\|\rightarrow 0$. That is, $\|R_\qu(S)A_n\|\rightarrow 0$. Set $\displaystyle \psi_n=\frac{\phi}{\|A_n\phi\|_{\vr}}$, for all $n$ and for some $0\not=\phi\in\vr$. Then $\|A_n\psi_n\|_{\vr}=1$, for all $n$ and $\|R_\qu(S)A_n\psi_n\|_{\vr}\rightarrow 0$. Thus $\qu\in\apo(S)$ and $\apo(\bL_S)\subseteq\apo(S)$. To see the other inclusion, let $\qu\in\apo(S)$, then there exists a sequence $\{\psi_n\}\subseteq\vr$ with $\|\psi_n\|_{\vr}=1$ and $\|R_\qu(S)\psi_n\|_{\vr}\rightarrow 0$. Pick a linear functional $\Phi$ in $(\vr)^*$ which is the dual of $\vr$ with $\|\Phi\|^*=1$, where $\|\cdot\|^*$ is a norm on the dual of $\vr$. Define, for each $n$, an operator $A_n\in B(\vr)$ by 
$$A_n\phi=\psi_n\Phi(\phi),\quad \text{for all}~~\phi\in\vr.$$ 
Then $\|A_n\|=1$ for all $n$, and
$$\|R_\qu(\bL_S)A_n\|=\|R_\qu(S)A_n\|=\|R_\qu(S)\psi_n\|_{\vr}\rightarrow 0.$$
Thus $\qu\in\apo(\bL_S)$ and therefore $\apo(\bL_S)=\apo(S).$\\	
~~(b) To establish the equality $\apo(\bR_T)=\sus(T)$, let $\qu\not\in\sus(T)$, that is $R_\qu(T)$ is surjective. Now for each $A\in B(\vr)$,
$$\|R_\qu(\bR_T)A\|=\|AR_\qu(T)\|\geq\|AR_\qu(T)\phi\|_{\vr},\quad\text{for all}~~\phi\in\vr.$$
That is, as $R_\qu(T)$ is surjective, $\|R_\qu(\bR_T)A\|\geq\|A\psi\|_{\vr}$, for all $\psi=R_\qu(T)\phi\in\vr$. Hence
$$\|R_\qu(\bR_T)A\|\geq\sup_{\|\psi\|=1}\|A\psi\|_{\vr}=\|A\|,\quad\text{for all}~~A\in B(\vr).$$
Therefore $R_\qu(\bR_T)$ is bounded below, and hence by proposition \ref{P4}, $\qu\not\in\apo(\bR_T)$. Conversely suppose that $R_\qu(\bR_T)$ is bounded below. Then there exists $c>0$ such that $c\|A\|\leq|\|R_\qu(\bR_T)\||=\|AR_\qu(T)\|$, for all $A\in B(\vr)$; where $|\|\cdot\||$ is the norm on $B(B(\vr))$. Choose a unit vector $\psi\in\vr$. For arbitrary linear functional $\Phi\in(\vr)^*$, let $A_\Phi\in B(\vr)$ given by
$$A_\Phi(\phi)=\psi\Phi(\phi),\quad\text{for all}~~\phi\in\vr.$$
Then
$$c\|\Phi\|^*=c\|A_\Phi\|\leq\|A_\Phi R_\qu(T)\|=\|\Phi\circ R_\qu(T)\|,\quad\text{for all}~~\Phi\in(\vr)^*.$$
Hence $R_\qu(T)^\dagger$ is bounded below. That is, by proposition \ref{NT1}, $R_\qu(T)^\dagger$ is injective. Therefore by propositions \ref{IP30}, $\ra(R_\qu(T))^\perp=\kr(R_\qu(T)^\dagger)=\{0\}$, and so $R_\qu(T)$ is surjective. Thus we have $\apo(\bR_T)=\sus(T).$\\
~~(c) Now 
\begin{eqnarray*}
\qu\notin\sus(\bL_S)&\iff& R_\qu(\bL_S)\text{~~is sujective}
\iff R_\qu(S)\text{~~is sujective}
\iff\qu\notin\sus(S).
\end{eqnarray*}
Therefore $\sus(\bL_S)=\sus(S)$.\\
~~(d) In order to verify the inclusion $\sus(\bR_T)\subseteq\apo(T)$, let $\qu\not\in\apo(T)$, then by proposition \ref{P4}, $R_\qu(T)$ is bounded below on $\vr$. Therefore, by proposition \ref{NT1}, $R_\qu(T)$ is injective. Thus by proposition \ref{PI}, $R_\qu(T)$ is left invertible on $\vr$. Therefore, there exist $P\in B(\vr)$ such that $PR_\qu(T)=\Iop$. This implies that $$R_\qu(\bR_T)\bR_PA=\bR_P(A)R_\qu(T)=APR_\qu(T)=A,\quad\text{for all}~~A\in B(\vr).$$
That is, $R_\qu(\bR_T)$ is right invertible on $B(\vr)$, thus by proposition \ref{PS}, $R_\qu(\bR_T)$ is surjective. Hence $\qu\not\in\sus(\bR_T)$, and we get $\sus(\bR_T)\subseteq\apo(T)$. To verify the other inclusion $\apo(T)\subseteq\sus(\bR_T)$, suppose that $\qu\not\in\sus(\bR_T)$, then $R_\qu(\bR_T)$ is surjective. This implies that for each $A\in B(\vr)$, there exists $B\in B(\vr)$ such that $R_\qu(\bR_T)B=A$. That is, $BR_\qu(T)=A$. Assuming $A,B\neq0$ without loss of generality, we get
$$\|R_\qu(T)\|\geq\frac{\|A\|}{\|B\|}.$$ This gives that $R_\qu(T)$ is bounded below, and $\qu\not\in\apo(T)$. Therefore the equality $\sus(\bR_T)=\apo(T)$ holds.
\end{proof}	
The following theorem is the main result about the S-spectral properties of commutators which we provide in this note. 
\begin{theorem}\label{CT1}
	For arbitrary operators $S,T\in B(\vr)$ the following assertions hold true for their commutator $C(S,T)$.
	\begin{enumerate}
		\item[(a)] $\sigma_S(C(S,T))=\sigma_S(S)-\sigma_S(T)$,
		\item[(b)] $\apo(C(S,T))=\apo(S)-\sus(T)$,
		\item[(c)] $\sus(C(S,T))=\sus(S)-\apo(T)$.
	\end{enumerate}
\end{theorem}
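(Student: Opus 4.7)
The plan is to prove each equality as two inclusions. In each case, the inclusion "$\subseteq$" will follow quickly from Proposition \ref{COP1} applied to the commuting pair $\bL_S$ and $-\bR_T$, combined with Proposition \ref{p1}, while the reverse inclusion is the heart of the argument and will be obtained via Theorem \ref{TC1} together with an explicit rank-one operator built from Berberian eigenvectors in $\HI$.

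For the "$\subseteq$" direction, I first observe that $\bL_S\bR_T=\bR_T\bL_S$, so $\bL_S$ and $-\bR_T$ commute. The elementary identity $R_q(-A)=R_{-q}(A)$ yields $\apo(-A)=-\apo(A)$, $\sus(-A)=-\sus(A)$ and $\sigma_S(-A)=-\sigma_S(A)$. Combining these with Proposition \ref{p1}(a)--(d), which identifies the spectra of $\bL_S$ and $\bR_T$ with those of $S$ and $T$, and with the general fact $\sigma_S=\apo\cup\sus$ (immediate from equation \eqref{sue1} and Proposition \ref{P3}), Proposition \ref{COP1} delivers all three "$\subseteq$" inclusions at once.

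For the "$\supseteq$" direction in part (b), fix $\alpha\in\apo(S)$ and $\beta\in\sus(T)$. Using the axial symmetry of the S-spectrum, I may replace $\alpha$ and $\beta$ by suitable conjugates within their spheres so that $\alpha\beta=\beta\alpha$. Theorem \ref{TC1} then supplies $\phi\in\HI\setminus\{0\}$ with $\Aci\phi=\phi\alpha$, and Proposition \ref{su1}(b) combined with Proposition \ref{C1}(d) and Theorem \ref{TC1} applied to $T^\dagger$ supplies $\psi\in\HI\setminus\{0\}$ with $(T^\circ)^\dagger\psi=\psi\overline{\beta}$. Writing $\phi=\{[\phi_n]\}$ and $\psi=\{[\psi_n]\}$ for representing sequences in $\vr$, the two eigenequations translate to $\gli\|S\phi_n-\phi_n\alpha\|_{\vr}=0$ and $\gli\|T^\dagger\psi_n-\psi_n\overline{\beta}\|_{\vr}=0$. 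I define rank-one operators $A_n\in B(\vr)$ by $A_n\xi=\phi_n\langle\psi_n\mid\xi\rangle_{\vr}$; each is right linear with $\|A_n\|=\|\phi_n\|\|\psi_n\|$. Setting $q=\alpha-\beta$ and directly expanding $R_q(C(S,T))(A_n)\xi$, using right linearity of $S$ and $T$, the commutativity $\alpha\beta=\beta\alpha$, the identity $\langle\psi_n\overline{\beta}\mid\xi\rangle=\beta\langle\psi_n\mid\xi\rangle$, and the polynomial identity $R_r(r)=0$ valid for every quaternion $r$, collapses the leading term to $\phi_n\,R_q(\alpha-\beta)\,\langle\psi_n\mid\xi\rangle=0$ and leaves an error whose operator norm is bounded by a fixed multiple of $\|S\phi_n-\phi_n\alpha\|+\|T^\dagger\psi_n-\psi_n\overline{\beta}\|$. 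Therefore $\gli\|R_q(C(S,T))(A_n)\|=0$, and passing to a subsequence along which $\|\phi_n\|$ and $\|\psi_n\|$ tend to $1$ (hence $\|A_n\|\to 1$) and normalizing produces a genuine unit sequence witnessing $\alpha-\beta\in\apo(C(S,T))$.

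Part (c) follows by the mirror construction, swapping the roles of $\apo$ and $\sus$: from $\alpha\in\sus(S)=\apo(S^\dagger)=\sigma_{pS}((\Aci)^\dagger)$ and $\beta\in\apo(T)=\sigma_{pS}(T^\circ)$ one produces eigenvectors of $(\Aci)^\dagger$ and $T^\circ$ in $\HI$ and the analogous rank-one operator witnessing failure of surjectivity of $R_{\alpha-\beta}(C(S,T))$, then invokes Proposition \ref{su1}(b) to conclude $\alpha-\beta\in\sus(C(S,T))$. Part (a) is assembled from (b), (c), and the decomposition $\sigma_S=\apo\cup\sus$; the only residual case, in which $\alpha\in\sigma_{pS}(S)\setminus\sus(S)$ and $\beta\in\sigma_{pS}(T)\setminus\sus(T)$ so that honest eigenvectors already sit in $\vr$, is dispatched by a direct rank-one construction internal to $\vr$. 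The principal obstacle throughout is the quaternionic non-commutativity: it invalidates the direct eigenvalue relation $C(S,T)(A)=A(\alpha-\beta)$ available in the complex setting and forces the use of $R_q$, the axial-symmetry reduction to commuting $\alpha,\beta$, and the identity $R_q(q)=0$; a secondary technicality, handled by the subsequence-and-normalization step, is that Berberian eigenvectors only have unit norm in the $\gli$ sense.
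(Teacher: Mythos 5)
Your ``$\subseteq$'' directions are fine and are exactly the paper's argument (Proposition \ref{COP1} applied to the commuting pair $\bL_S,\,-\bR_T$ plus Proposition \ref{p1}), and your ``$\supseteq$'' strategy --- eigenvectors of $S^\circ$ and $(T^\circ)^\dagger$ in the Berberian space fed into rank-one operators $A_n\xi=\phi_n\langle\psi_n\mid\xi\rangle_{\vr}$ --- is a genuinely different, more concrete route than the paper's, which instead multiplies ``eigen-operators'' of $\bL_S^\circ$ and $\bR_T^\circ$ one level up in $B(B(\vr))$. However, the step your entire reverse inclusion hangs on is false: you cannot ``replace $\alpha$ and $\beta$ by suitable conjugates within their spheres so that $\alpha\beta=\beta\alpha$'', because that replacement changes the difference you must exhibit. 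Non-real quaternions commute only when their imaginary parts are $\R$-linearly dependent, so a commuting pair $\alpha'\in[\alpha]$, $\beta'\in[\beta]$ has $|\Imm(\alpha'-\beta')|\in\{\,\bigl||\Imm(\alpha)|-|\Imm(\beta)|\bigr|,\ |\Imm(\alpha)|+|\Imm(\beta)|\,\}$, whereas $|\Imm(\alpha-\beta)|$ can lie anywhere in between; thus $[\alpha'-\beta']$ and $[\alpha-\beta]$ are in general disjoint spheres, and axial symmetry of $\apo(C(S,T))$ cannot carry the conclusion from one to the other. Your own computation shows the commutativity is essential, not cosmetic: the leading term is $\phi_n(\alpha^2-2\alpha\beta+\beta^2)\langle\psi_n\mid\xi\rangle_{\vr}$, which equals $\phi_n R_q(q)\langle\psi_n\mid\xi\rangle_{\vr}=0$ with $q=\alpha-\beta$ only if $\alpha\beta=\beta\alpha$. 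So what you actually prove is $\{\alpha-\beta:\alpha\in\apo(S),\,\beta\in\sus(T),\,\alpha\beta=\beta\alpha\}\subseteq\apo(C(S,T))$, strictly weaker than (b). Moreover the gap is not repairable: on $\vr=\quat$ every bounded right linear operator is a left multiplication $L_a:\phi\mapsto a\phi$; take $S=L_{\bi}$, $T=L_{\bj}$. Then $R_{\bi}(S)=0$ and $R_{\bj}(T)=0$, so $\bi\in\apo(S)$ and $\bj\in\sus(T)$, while for $q=\bi-\bj$ one finds $R_q(C(S,T))(L_a)=-2L_{\bi a\bj}$, which is bounded below and surjective on $B(\quat)$; hence $\bi-\bj\notin\apo(C(S,T))$, indeed $\bi-\bj\notin\sigma_S(C(S,T))$. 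This incidentally flags the precise spot where the paper's own proof is unsound --- it manipulates $\qu,\pu$ as if they were central scalars in displays like $(C(S,T)^\circ-\qu+\pu)AB=0$ --- so for non-commuting pairs the asserted inclusions themselves are in doubt; your more careful computation exposes this obstruction rather than overcoming it.

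Two further gaps, independent of the first. (i) In part (a), the decomposition $\sigma_S=\apo\cup\sus$ leaves four cases; (b) and (c) cover $\apo-\sus$ and $\sus-\apo$, but you address only a point-spectrum version of the $\apo-\apo$ case and omit the $\sus-\sus$ case (e.g.\ $\alpha\in\sus(S)\setminus\apo(S)$, $\beta\in\sus(T)\setminus\apo(T)$) altogether; furthermore your proposed ``rank-one construction internal to $\vr$'' cannot settle the $\apo-\apo$ case even for honest eigenvectors, since $A\xi=\phi\langle\psi\mid\xi\rangle_{\vr}$ turns the $T$-side into $T^\dagger$, i.e.\ it pairs $\apo(S)$ with $\apo(T^\dagger)=\sus(T)$, never with $\apo(T)$. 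The paper resolves both leftover cases by a device you do not have: the lift $C(S,T)=\bC(\bL_S,\bR_T)$ of equation \ref{Comeq}, which lets it apply (b) and (c) one level up, where Proposition \ref{p1} interchanges $\apo$ and $\sus$ for $\bR_T$ (equations \ref{e1} and \ref{e2}). (ii) Your normalization step misuses generalized limits: $\gli$ satisfies only $\liminf\le\gli\le\limsup$, so $\gli\|\phi_n\|_{\vr}^2=1$ yields no subsequence with $\|\phi_n\|_{\vr}\to1$ (consider $0,2,0,2,\dots$); relatedly, an eigenvector of $S^\circ$ lives a priori in the completion $\HI$, not in $\mfp$, so it need not admit a representing sequence at all --- you should instead use the kernel vector that the proof of Theorem \ref{TC1} constructs inside $\mfp$, or run the estimates entirely at the level of $\gli$. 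These last points are repairable; the commutativity gap above is not.
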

\begin{proof}
To prove (b),  In order to establish $\apo(S)-\sus(T)\subseteq\apo(C(S,T))$, let $\qu\in\apo(S)$ and $\pu\in\sus(T)$. It follows, from (a) and (b) in the proposition \ref{p1}, that $\qu\in\apo(\bL_S)$ and $\pu\in\apo(\bR_T)$. By proposition \ref{TC1} we have
$$\qu\in\apo(\bL_S)=\apo(\bL_S^\circ)=\sigma_{pS}(\bL_S^\circ).$$
Therefore, there exists $A\in B(\vr)$ such that $A\not=0$ and
$(\bL_S^\circ-\qu)A=0$. That is, $(S^\circ-\qu)A=0$. Again by proposition \ref{TC1},
$$\pu\in\apo(\bR_T)=\apo(\bR_T^\circ)=\sigma_{pS}(\bR_T^\circ).$$
Therefore there exists $B\in B(\vr)$ such that $B \not=0$ and $(\bR_T^\circ-\pu)B=0$. That is $B(T^\circ-\pu)=0$. Consider
\begin{eqnarray*}
	(C(S,T)^\circ-\qu+\pu)A B&=&(\bL_S^\circ-\bR_T^\circ-\qu+\pu)A B\\
	&=&(\bL_S^\circ-\qu)A B-(\bR_T^\circ-\pu)AB\\
	&=&(S^\circ-\qu)A B-AB(T^\circ-\pu)=0.
\end{eqnarray*}
Thus
$$\qu-\pu\in\sigma_{pS}(C(S,T)^\circ)=\apo(C(S,T)^\circ)=\apo(C(S,T)).$$
Therefore $\apo(S)-\sus(T)\subseteq\apo(C(S,T)).$ Now since $\apo(\bL_S)=\apo(S)$ and $\apo(\bR_T)=\sus(T)$, we get from part (a) of proposition \ref{COP1}, 
$$\apo(C(S,T))=\apo(\bL_S-\bR_T)\subseteq\apo(\bL_S)-\apo(\bR_T)\subseteq\apo(S)-\sus(T).$$
This concludes the proof for (b).\\	
~~(c) Applying $\sus(\bL_S)=\sus(S)$, and $\sus(\bR_T)=\apo(T)$ in part (b) of proposition \ref{COP1}, the inclusion $\subseteq$ in assertion (c) is established. Next to show that $\sus(S)-\apo(T)\subseteq\sus(C(S,T))$, let $\qu\in\sus(S)$ and $\pu\in\apo(T)$. It follows from (a) and (b) in proposition \ref{p1}, and proposition \ref{TC1} that
$$\qu\in\apo(\bR_S)=\apo(\bR_S^\circ)=\sigma_{pS}(\bR_S^\circ)\quad \text{and}\quad \pu\in\apo(\bL_T)=\apo(\bL_T^\circ)=\sigma_{pS}(\bL_T^\circ).$$
Thus by the definition of point spectrum 	$\oqu\in\sigma_{pS}(\bR_S^\circ)\quad \text{and}\quad \opu\in\sigma_{pS}(\bL_T^\circ).$
Therefore, there exists $A, B\in B(\vr)$ with $A\not=0$ and $B\not=0$ such that $A(S^\circ-\oqu)=0$ and $(T^\circ-\pu)B=0$. Hence, by proposition \ref{da}, we have $((S^\circ)^\dagger-\qu)A^\dagger=0$ and $B^\dagger((T^\circ)^\dagger-\pu)=0$. Hence, by proposition \ref{da},
\begin{eqnarray*}
	((C(S,T)^\circ)^\dagger-\qu+\pu)A^\dagger B^\dagger
	&=&((\bL_S^\circ)^\dagger-\qu)A^\dagger B^\dagger-((\bR_T^\circ)^\dagger-\pu)A^\dagger B^\dagger\\
	&=&((S^\circ)^\dagger-\qu)A^\dagger B^\dagger -A^\dagger B^\dagger((T^\circ)^\dagger-\pu)=0.
\end{eqnarray*}
Therefore $\qu-\pu\in\sigma_{pS}((C(S,T)^\circ)^\dagger)$. By proposition \ref{C1}, $\qu-\pu\in\sigma_{pS}((C(S,T)^\circ)^\dagger)=\sigma_{pS}((C(S,T)^\dagger)^\circ)$. Now by propositions \ref{TC1} and \ref{su1}, we have 
$$\qu-\pu\in \sigma_{pS}((C(S,T)^\dagger)^\circ)=\apo(C(S,T)^\dagger)=\sus(C(S,T)).$$
Hence  $\sus(S)-\apo(T)\subseteq\sus(C(S,T))$, which completes the proof of (c).\\
To establish (a), let $S, T\in B(\vr)$. Since $\bL_S\bR_T(A)=\bR_T\bL_S(A)=SAT$ for all $A\in B(\vr)$, $\bL_S$,$\bR_T\in B(B(\vr))$ commute. Let $\qu\in\sigma_S(\bL_S)$, then if $\kr(\bL_S)\not=\{0\}$, there exists  $A\in\B(\vr)$ such that $A\not=0$ and $R_\qu(\bL_S)(A)=0$. That is
$$S^2A-2\re(\qu)SA+|\qu|^2A=(S^2-2\re(\qu)S+|\qu|^2)A=0.$$
Hence $(S^2-2\re(\qu)S+|\qu|^2)A\phi=0$, for some $\phi\in\vr$ as $A\not=0$, and therefore $\kr(R_\qu(S))\not=\{0\}.$ If $\ra(R_\qu(\bL_S))\not= B(\vr)$, then there exists $B\in B(\vr)$ such that $R_\qu(\bL_S)(A)\not=B$ for all $A\in B(\vr)$. That is, $S^2A-2\re(\qu)SA+|\qu|^2A\not=B$ for all $A\in B(\vr)$. In other words, $R_\qu(S)A\phi\not=B\phi$ for all $A\in B(\vr)$ and $\phi\in\vr)$. Hence $\ra(R_\qu(S))\not=\vr$. As a conclusion $\qu\in\Ss(S)$ and hence $\Ss(\bL_S)\subseteq\Ss(S)$.\\
Now let $\qu\in\Ss(\bR_T)$. If $\kr(R_\qu(\bR_T))\not=\{0\}$, then there exists $A\in B(\vr)$ such that $A\not=0$ and $R_\qu(\bR_T)(A)=0$, that is $AR_\qu(T)=0$. Thus $R_\qu(T)\phi=0$ for some $0\not=\phi\in\vr$, and therefore $\kr(R_\qu(T))\not=\{0\}$. If $\ra(R_\qu(\bR_T))^\bot\not= B(\vr)$, then there exists $B\in\B(\vr)$ such that $R_\qu(\bR_T)(A)\not=B$, for all $A\in B(\vr)$. That is $AR_\qu(T)\not=B$, for all $A\in B(\vr)$, and hence $\Iop R_\qu(T)\not=B$. Therefore $\ra(R_\qu(T))\not=\vr$. Hence we can conclude that $\qu\in\Ss(T)$ and $\Ss(\bR_T)\subseteq\Ss(T)$. Because $C(S,T)=\bL_S-\bR_T$, by part (c) of proposition \ref{COP1} we have
$$\Ss(C(S,T))\subseteq\Ss(\bL_S)-\Ss(\bR_T)\subseteq\Ss(S)-\Ss(T).$$
This establishes the inclusion $\subseteq$ in assertion (a). Now for each $A,B\in B(\vr)$, we have
\begin{eqnarray*}
	\bC(\bL_S,\bR_T)AB&=&\bL_SAB-A\bR_TB
	=SAB-ABT
	=C(S,T)AB.
\end{eqnarray*}
This implies that
\begin{equation}\label{Comeq}
	\bC(\bL_S,\bR_T)=C(S,T), ~\forall\,S,T\in B(\vr).
\end{equation} On the other hand, using equation \ref{Comeq}, from part (c),
\begin{equation}\label{e1}
\sus(C(S,T))=\sus(\bC(\bL_S,\bR_T))\supseteq\sus(\bL_S)-\apo(\bR_T)=\sus(S)-\sus(T)
\end{equation} 
as 	$\apo(\bR_T)=\sus(T)$ and $\sus(\bL_S)=\sus(S)$. Similarly from part (b), we get
\begin{equation}\label{e2}
\apo(C(S,T))=\apo(\bC(\bL_S,\bR_T))\supseteq\apo(\bL_S)-\sus(\bR_T)=\apo(S)-\apo(T)
\end{equation}
as 	$\apo(\bL_S)=\apo(S)$ and $\sus(\bR_T)=\apo(T)$. Now the inclusions (\ref{e1}) and (\ref{e2}) guarantee that the other inclusion in assertion (a) holds,  $$\Ss(C(S,T))\supseteq\Ss(\bL_S)-\Ss(\bR_T)\subseteq\Ss(S)-\Ss(T).$$ Therefore the assertion (a) follows. Hence the theorem holds.


\end{proof}
\section{Acknowledments}
K. Thirulogasanthar would like to thank the FRQNT, Fonds de la Recherche  Nature et  Technologies (Quebec, Canada) for partial financial support under the grant number 2017-CO-201915. Part of this work was done while he was visiting the University of Jaffna to which he expresses his thanks for the hospitality.

\end{document}